\newtheorem{theorem}{Theorem}[section]
\newtheorem{lemma}[theorem]{Lemma}
\newtheorem{prop}[theorem]{Proposition}
\newtheorem{defn}[theorem]{Definition}
\newtheorem{remark}[theorem]{Remark}
\newtheorem{conjecture}[theorem]{Conjecture}
\numberwithin{equation}{section}
\newcommand{\Z}{\mathbb{Z}}
\newcommand{\C}{\mathbb{C}}
\newcommand{\AI}{A_\infty}
\newcommand{\RP}{\mathbb{R}P}
\newcommand{\CP}{\mathbb{C}P}
\newcommand{\bL}{\mathbb{L}}
\newcommand{\Id}{{\mathrm{Id}}}
\newcommand{\Fuk}{{\sf Fuk}}
\newcommand{\MF}{{\sf MF}}
\newcommand{\K}{\mathbb{K}}
\newcommand{\m}{\mathfrak{m}}
\newcommand{\OC}{\mathcal{OC}}
\newcommand{\WT}[1]{\widetilde{#1}}
\newcommand{\one}{\mathbf{1}}
\title{Ungraded matrix factorizations as mirrors of non-orientable Lagrangians}
\author[Amorim]{Lino Amorim}
\address{Department of Mathematics\\ Kansas State University\\ 138 Cardwell Hall, 1228 N. 17th Street\\
	Manhattan, KS 66506\\ USA}
\email{lamorim@ksu.edu}
\author[Cho]{Cheol-Hyun Cho}
\address{Department of Mathematical Sciences, Research institute of Mathematics\\ Seoul National University\\ San 56-1, 
	Shinrimdong\\ Gwanakgu \\Seoul 47907\\ Korea}
\email{chocheol@snu.ac.kr}
\begin{document}

	\begin{abstract}
	We introduce the notion of ungraded matrix factorization as a mirror of non-orientable Lagrangian submanifolds.
	An ungraded matrix factorization of a polynomial $W$, with coefficients in a field of characteristic 2, is a  square matrix $Q$ of polynomial entries 
	satisfying $Q^2 = W \cdot \mathrm{Id}$. We then show that non-orientable Lagrangians correspond to ungraded matrix factorizations under the localized mirror functor and illustrate this construction in a few instances. Our main example is the Lagrangian submanifold $\mathbb{R}P^2 \subset \mathbb{C}P^2$ and its mirror ungraded matrix factorization, which we construct and study. In particular, we prove a version of Homological Mirror Symmetry in  this setting.  	
%	We show that the closed-open map for the ungraded matrix factorization of $\mathbb{R}P^2$ is an isomorphism.
	\end{abstract}

	\maketitle

\section{Introduction}
An extended version of the Homological Mirror Symmetry conjecture relates the Fukaya category of a symplectic manifold
with the matrix factorization category of a mirror Landau-Ginzburg model.
Here, we restrict ourselves to the cases that the Landau-Ginzburg model is given by a (Laurent) polynomial $W$.
Matrix factorization category is a classical subject, extensively studied in 80's under the name of maximal Cohen-Macaulay modules
of Cohen-Macaulay rings, and later shown to be  derived equivalent to the category of singularities.

In this paper, we would like to bring attention to  an unexplored aspect of this mirror correspondence.
In the study of homological mirror symmetry  the objects of Fukaya category are usually assumed to be  orientable Lagrangians.
But it has been already known that monotone non-orientable Lagrangians (such as $\RP^n \subset \CP^n$ for even $n$)
have non-trivial Floer cohomology with coefficients in a field of characteristic two. But its mirror counterpart has not been studied.

First, we will introduced the notion of an ungraded matrix factorization as a mirror of non-orientable Lagrangian submanifold.
Recall that a ($\Z/2$-graded) matrix factorization of $W$ is given by 
a pair $(Q_0,Q_1)$ of polynomial matrices, say of size $k \times k$,
satisfying  
\begin{equation}\label{eq:qq}
Q_0 \cdot Q_1 = Q_1 \cdot Q_0 = W \cdot \textrm{Id}_k.
\end{equation}
We may write it as the following block matrix
\begin{equation}\label{eq:q0q1}
Q = \left(\begin{array}{cc}
 0    &Q_0   \\
 Q_1   &  0
\end{array}\right),  Q^2 = W \cdot \textrm{Id}_{2k}.
\end{equation}

We consider a polynomial $W$ with coefficients in a field of characteristic 2 and define an {\textbf{ungraded matrix factorization}} as a polynomial matrix $Q$, say of size $n$ with  $Q^2 = W \cdot \textrm{Id}_n$.
Here $n$ does not have to be even, and hence it may not be of the block form of \eqref{eq:q0q1}. 

To understand why this should be a mirror of a non-orientable Lagrangian submanifold, 
recall that  morphisms in the Fukaya category are generated by (transverse) intersections of two Lagrangians,
and if both Lagrangians are oriented, such an intersection carries a canonical $\Z/2$-grading:
the direct sum of tangent spaces of two oriented Lagrangians are compared to 
the orientation of the ambient symplectic manifold. Such a $\Z/2$-grading is lost if at least one of the Lagrangian is non-orientable.

From an ungraded matrix factorization $Q'$ of size $n$,
we can define a doubling $\mathfrak{D}(Q')$, which is a $\Z/2$-graded matrix factorization 
 \begin{equation} 
\mathfrak{D}(Q')= \left(\begin{array}{cc}
 0    &Q'   \\
 Q'   &  0
\end{array}\right).
\end{equation}
The homological mirror of the doubling of an ungraded matrix factorization is in fact an orientation double cover!
\begin{prop}
If an non-orientable monotone Lagrangian $L$ is homological mirror to the ungraded matrix factorization $Q'$ under the localized mirror functor \cite{CHL17},
then the orientation double cover of $L$ is homological mirror to the doubling $\mathfrak{D}(Q')$.
\end{prop}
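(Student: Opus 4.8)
The plan is to compute directly the image of the orientation double cover $\pi\colon \WT{L}\to L$ under the localized mirror functor $\mathcal F^{\bL}$ of \cite{CHL17} --- here $\bL$ denotes the reference Lagrangian, $b$ its bounding cochain, and $W=\m_0(\bL;b)$ the resulting potential --- and to show that, with compatible Floer data, $\mathcal F^{\bL}(\WT L)=\mathfrak{D}\big(\mathcal F^{\bL}(L)\big)$ on the nose. Granting this, the hypothesis $\mathcal F^{\bL}(L)\cong Q'$ together with the elementary fact that $\mathfrak{D}$ carries isomorphic matrix factorizations to isomorphic ones gives $\mathcal F^{\bL}(\WT L)\cong\mathfrak{D}(Q')$, which is the assertion. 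The two geometric inputs I would use are: (i) every $J$--holomorphic strip with boundary on $\bL\cup\WT L$ is the lift of a strip with boundary on $\bL\cup L$, because the domain $\R\times[0,1]$ of a strip is simply connected and only the boundary arc on the $\WT L$--side must be lifted; and (ii) the two sheets of $\WT L$ over a point $p\in\bL\cap L$ carry opposite $\Z/2$--degrees, since a point of the orientation cover over $p$ is a choice of orientation of $T_pL=T_{\widetilde p}\WT L$ and reversing the orientation of one Lagrangian at a transverse intersection flips the Floer index parity.

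First I would record the underlying graded module. As $\bL$ is transverse to $L$ and $\pi$ is a double cover, $\bL\cap\WT L$ has exactly two points over each point of $\bL\cap L$, so $\mathrm{CF}(\bL,\WT L)\cong\mathrm{CF}(\bL,L)^{\oplus 2}$ as $\K$--modules; since $\WT L$ is orientable this complex is $\Z/2$--graded, and by (ii) the two points over a given $p$ lie in opposite degrees. Writing $C=\mathrm{CF}(\bL,L)$ --- which is ungraded, as $L$ is non--orientable --- we thus obtain an identification $\mathrm{CF}(\bL,\WT L)=C\oplus C[1]$ in which the even (resp.\ odd) summand is spanned by the degree--$0$ (resp.\ degree--$1$) lifts, each sent to the corresponding generator of $C$.

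Next I would identify the twisted differential. Since $b$ is odd and the degree shift of $\m_n$ is $2-n$, each term $\m_{k+1+l}(b^{\otimes k},-,b^{\otimes l})$ raises degree by $1$; hence $\m_1^{b}$ is homogeneous of degree $+1$ and on $C\oplus C[1]$ is block--off--diagonal, $\m_1^{b}=\left(\begin{smallmatrix}0&A\\ B&0\end{smallmatrix}\right)$ with $A\colon C[1]\to C$ and $B\colon C\to C[1]$. A matrix coefficient of $B$ counts rigid $b$--decorated strips from the even lift of $p_{\mathrm{in}}\in\bL\cap L$ to a lift of $p_{\mathrm{out}}\in\bL\cap L$; by (i) these are in bijection with the rigid $b$--decorated strips $\bar u$ from $p_{\mathrm{in}}$ to $p_{\mathrm{out}}$ on $L$, each $\bar u$ lifting uniquely once its incoming end is sent to the even lift, and --- because the index grading is compatible with $\m_1^{b}$ --- the outgoing end of the lift is forced into degree $0+1=1$, i.e.\ onto the odd lift of $p_{\mathrm{out}}$. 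Hence $B=\m_1^{b}|_{C}=\mathcal F^{\bL}(L)$, and the same argument with even and odd interchanged gives $A=\mathcal F^{\bL}(L)$; therefore $\mathcal F^{\bL}(\WT L)=\mathfrak{D}\big(\mathcal F^{\bL}(L)\big)$. I would also note that $\m_0(L)=0$ (this is implicit in the hypothesis, since $Q'^{2}=W\cdot\Id$ forces $(\m_1^{b})^{2}=W\cdot\Id$ on $\mathrm{CF}(\bL,L)$) and that $\m_0(\WT L)=0$ as well --- a disc bounded by $\WT L$ projects to a disc bounded by $L$ whose boundary loop is orientation--preserving, and these form a subset of the empty count defining $\m_0(L)$ --- so $\mathcal F^{\bL}(\WT L)$ is honestly a matrix factorization of $W$.

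The step that needs care --- and the main obstacle --- is the Floer--theoretic setup for the orientation double cover itself: one must choose Floer and perturbation data for $(\bL,\WT L)$ compatible with those for $(\bL,L)$ so that the moduli of strips for $\WT L$ are literally the lifts of those for $L$, and (if one wants the statement for the full $A_\infty$--functor, not merely on objects) handle the degenerate self--intersection of the immersion $\WT L\to M$ via a clean--intersection / Morse--Bott model. Monotonicity of $\WT L$, inherited from $L$, is what controls bubbling here; within the framework of \cite{CHL17} the remaining transversality and gluing bookkeeping is routine, so everything beyond this setup reduces to the index count and the path--lifting bijection described above.
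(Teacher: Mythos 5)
Your proposal is correct and follows essentially the same route as the paper's proof: grade the two lifts of each $p\in\bL\cap L$ by comparing orientations at the intersection point, observe that $\m_1^{b(x),b}$ has odd degree (so it is block--off--diagonal on $C\oplus C[1]$), and identify both off--diagonal blocks with $Q'$ because rigid decorated strips for $(\bL,\WT L)$ are exactly the lifts of those for $(\bL,L)$. You fill in two details the paper leaves implicit --- the path--lifting bijection via simple connectedness of the strip domain, and the compatibility of Floer data for the immersion $\WT L\to M$ --- but the underlying argument is the same.
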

Only certain functions $W$ admit such a matrix factorization  $(Q_0,Q_1)$ with  $Q_0 = Q_1$, and under
the mirror correspondence, this may be related to the existence of an non-orientable Lagrangian.

We will illustrate this in the example of $A_n$-singularity, whose explicit homological mirror symmetry
has been discussed by the second author together with Choa and Jung \cite{CCJ}

\begin{theorem}
 There is a non-orientable non-compact Lagrangian $K$ in the Milnor fiber quotient of $A_n$-singularity (for $n=2k-1$) $W=x^{2k} + y^2$
 which is mirror 
to the following ungraded matrix factorization  
 $Q= \left(\begin{array}{cc}
  y  &  x^{k}\\
 x^{k} & -y    
   \end{array}\right)$
   of the mirror transpose polynomial $W^T = W$.
\end{theorem}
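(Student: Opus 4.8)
The plan is to compute the image of $K$ under the localized mirror functor of \cite{CHL17} and recognize it as $Q$. Write $\Sigma$ for the Milnor fiber of $W=x^{2k}+y^2$ and $\OL{\Sigma}$ for its $\Z/2$-quotient. Since the exponent matrix of $W=x^{2k}+y^2$ is diagonal, $W^T=W$, so there is nothing to check about the transpose; the content is the identification $\mathcal{F}^{\bL}(K)\cong Q$, where $\mathcal{F}^{\bL}$ is attached to the immersed Lagrangian generator $\bL$ with weak bounding cochain $b$ used for the $A_n$ homological mirror symmetry in \cite{CCJ}, whose disk potential is exactly $W^T=W$. Working over a field $\K$ of characteristic $2$, the construction of \cite{CHL17} applies verbatim to an arbitrary Lagrangian brane $L$ — orientations and spin structures only served to fix the $\Z/2$-grading and signs, and are now irrelevant — so $\mathcal{F}^{\bL}(L)$ is the free $\K[x,y]$-module on $CF(\bL,L)$ with differential $\delta=\m_1^{b}$ obtained by deforming $\m_1$ with $b$. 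The curved $A_\infty$-relations give $\delta^2=W\cdot\Id$, i.e. $\mathcal{F}^{\bL}(L)$ is an ungraded matrix factorization of $W$; and by the Proposition above, the orientation double cover of $L$ is carried by the ordinary graded functor to the doubling $\mathfrak{D}\big(\mathcal{F}^{\bL}(L)\big)$.

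Next I would pin down $K$ and $\bL$. On the geometric side one identifies the involution $\iota$ of $\Sigma$ defining the Milnor fiber quotient, describes the $\iota$-invariant Lagrangian $\WT{K}\subset\Sigma$ explicitly in the standard $A_{2k-1}$ picture, and checks that its image $K\subset\OL{\Sigma}$ is the non-orientable non-compact Lagrangian of the statement, with $\WT{K}\to K$ its orientation double cover. I would then isotope $K$ and $\bL$ into transverse, minimal position so that $\bL\cap K=\{u_0,u_1\}$, which exhibits $\mathcal{F}^{\bL}(K)$ as a rank-two free module with ordered basis $(u_0,u_1)$.

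The computation of $\delta$ is then a count of holomorphic polygons in the surface $\OL{\Sigma}$ with boundary on $\bL\cup K$, one corner at $u_0$ or $u_1$ and the remaining corners at self-intersection points of $\bL$ weighted by the corresponding components of $b$; each polygon contributes the monomial in $x,y$ recording its combinatorial area and winding numbers. I expect exactly two families to appear: ``short'' strips joining $u_0$ and $u_1$, contributing $y$ and $-y$ along the diagonal, and polygons sweeping around the branch divisor, which — because $\WT{K}$ double-covers $K$, so the relevant region is traversed $k$ times — contribute $x^{k}$ off the diagonal. Assembling these gives $\delta=\left(\begin{smallmatrix} y & x^{k}\\ x^{k} & -y\end{smallmatrix}\right)=Q$; as a consistency check $Q^2=(x^{2k}+y^2)\Id=W\cdot\Id$, as predicted by the $A_\infty$-relations. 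To upgrade the isomorphism $\mathcal{F}^{\bL}(K)\cong Q$ to the assertion that $K$ is \emph{mirror} to $Q$, I would use that, by \cite{CCJ}, the graded localized mirror functor is (a component of) an equivalence, and that the same polygon count for the double cover (or the Proposition applied to $\mathcal{F}^{\bL}(K)$) gives $\mathcal{F}^{\bL}(\WT{K})\cong\mathfrak{D}(Q)$; the two statements are then equivalent.

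The main obstacle lies in two places. First, making the localized mirror functor and the identity $\delta^2=W\cdot\Id$ rigorous for this $K$: it is non-compact and lives in a $\Z/2$-quotient, so one must fix a model at infinity — a stop, or a suitable wrapping — and check that energy and index estimates still force the relevant moduli spaces to be finite and that the curved $A_\infty$-relations close up, as well as make precise that passage to characteristic $2$ legitimately discards the orientation and spin data. Second, the polygon count itself: the delicate points are showing that no further polygons contribute and, above all, that the monomials produced are exactly $x^{k}$ and $y$ rather than lower powers — this is where the precise normal form of $\WT{K}$, its position relative to $\bL$, and the degree of the covering $\WT{K}\to K$ enter, and it is the genuinely example-specific part of the argument. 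Signs never intervene, since we work in characteristic $2$; this is precisely why the ungraded formalism of this paper is the natural home for the computation.
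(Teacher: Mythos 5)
Your overall strategy — apply the localized mirror functor attached to the Seidel/immersed Lagrangian $\bL$, enumerate decorated holomorphic polygons to read off the deformed differential, and recognize the resulting $2\times 2$ matrix — is indeed the paper's. But several of the specifics as you have set them up are incorrect and would derail the computation.

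First, the quotient. You write $\OL{\Sigma}$ for the $\Z/2$-quotient of the Milnor fiber, but the paper quotients by the full diagonal symmetry group $G=\Z/2k\oplus\Z/2$; the result is the orbifold sphere $\mathbb{P}^1_{2k,2,\infty}$, with orbifold points of orders $2k$ and $2$ and one puncture. This matters because the Seidel Lagrangian $\bL$ lives in $\mathbb{P}^1_{2k,2,\infty}$, not in a $\Z/2$-quotient of $\Sigma$, and both the intersection pattern $\bL\cap K$ and the disk potential depend on taking the full quotient.

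Second, the disk potential. You assert that the disk potential of $\bL$ is $W^T=x^{2k}+y^2$, but in the setup of \cite{CCJ} it is the three-variable $W=x^{2k}+y^2+xyz$. The localized mirror functor therefore lands in $\MF^{un}(x^{2k}+y^2+xyz)$; the mirror of $K$ is first computed as a matrix factorization of this, and only afterwards is $z$ set to $0$ to produce a factorization of $W^T=x^{2k}+y^2$ (the paper flags this as the Berglund--H\"ubsch step, and the $xyz$-term contributes an $xz$ entry that must be killed by $z=0$). Dropping $z$ from the start erases a whole family of polygons from the count.

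Third, the polygon count itself is internally inconsistent: strips joining the two distinct intersection points $u_0$ and $u_1$ contribute to the \emph{off-diagonal} matrix entries, not the diagonal ones as you claim. In the paper's count, the $y^2$-bigon is cut by $K$ into two pieces giving the off-diagonal $y$'s, the $XYZ$-triangle gives an off-diagonal $xz$ (killed at $z=0$), and it is the $x^{2k}$-gon --- a polygon all of whose $2k$ corners are at the immersed $X$-corner of $\bL$ --- that is cut in half by $K$ and produces the \emph{diagonal} $x^k$ entries. Your intuition that the $x^k$ arises because ``$\WT{K}$ double-covers $K$, so the region is traversed $k$ times'' is not the mechanism; the exponent halves precisely because $K$ passes through the $\Z/2$-orbifold point and therefore bisects the $x^{2k}$-gon, leaving $k$ of the $2k$ corners on each half. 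Once these three points are corrected, the rest of your outline (rank-two module, $\delta^2=W\cdot\Id$ from the curved $A_\infty$-relations, the doubling giving $\mathfrak{D}(Q)$ for $\WT{K}$) does match the paper's argument, which it gives explicitly only for $A_1$ and leaves the general case as an exercise.
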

\begin{remark}
For ADE singularity, there are finitely many indecomposable matrix factorizations (see \cite{Yo}).
$\mathfrak{D}(Q)$ for $A_{2k-1}$ singularity is the only indecomposable matrix factorization that can be written as a doubling.
\end{remark}
%
%
%
%For ADE singularities of two variables, indecomposable matrix factorizations are already classified by algebraists in 80's.  
%Homological mirror symmetry for them has been recently found by the second author together with Choa and Jeong in ...
%
%Among the list, there are matrix factorizations $Q_0Q_1 = Q_1Q_0=W \cdot I$ such that $Q_0 = Q_1$ only in the case of $A_{\textrm{odd}}$-singularity.
%One may ask whether the associated ungraded matrix factorization $Q = Q_0 =Q_1$ is mirror to unorientable Lagrangians.
%

We will now describe our results on our main example, the real projective space $\mathbb{R}P^2$ in $\mathbb{C}P^2$.
Floer theory of non-orientable Lagrangians is well-defined if we use coefficients in a field  $\mathbb{K}$ with a characteristic two, such as an extension field of $\Z/2$.
We find the homological mirror of real projective spaces using the localized mirror functor defined in \cite{CHL-toric} and the computations from \cite{AA}.
\begin{theorem}
A $\mathbb{R}P^n \subset \CP^n$ for $n$ even is mirror to an ungraded matrix factorization under
the localized mirror functor of \cite{CHL-toric}. 

In particular, the mirror to $\mathbb{R}P^2$ is the following ungraded matrix factorization of $W = x + y + \frac{1}{xy}$:
\begin{equation}\label{eq:1}
Q_{\mathbb{R}P^2}= \left(\begin{array}{cccc}
 0    &1 &  1 & x^{-1}y^{-1}    \\
 y    &   0  & x^{-1} & 1    \\
 x    &   y^{-1}  & 0 & 1    \\
  1    &   x  & y & 0    \end{array}\right) 
\end{equation}
%which satisfies $Q_{\mathbb{R}P^2}^2 = W \cdot \mathrm{Id}_4$.
\end{theorem}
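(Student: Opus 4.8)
The plan is to evaluate the localized mirror functor $\mathcal{F}^{\mathbb{T}}$ of \cite{CHL-toric}, associated to the monotone Lagrangian torus fiber $\mathbb{T} \subset \CP^n$ equipped with its Maurer--Cartan element, on the Lagrangian $\RP^n$, and to recognize the output as an ungraded matrix factorization. Since $\RP^n$ is non-orientable for $n$ even, the first point is that the construction of \cite{CHL-toric} is insensitive to working over a field $\mathbb{K}$ with $\mathrm{char}\,\mathbb{K} = 2$: the monotone $A_\infty$-operations remain defined, $\mathcal{F}^{\mathbb{T}}$ is still an $A_\infty$-functor, and it sends a Lagrangian $L$ to the complex $\big(CF(\mathbb{T}, L), m_1^{(b,0)}\big)$, whose square is $(W - \lambda_L)\cdot\mathrm{Id}$ where $m_0^L = \lambda_L\cdot\one$ and $W$ is the superpotential of $\CP^n$ at the monotone fiber (for $n=2$, $W = x + y + (xy)^{-1}$). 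When $L$ is non-orientable the module $CF(\mathbb{T}, L)$ carries no $\Z/2$-grading, so the output is precisely an \emph{ungraded} matrix factorization. Next one observes that the relative Maslov class of $\RP^n \subset \CP^n$ takes values in $(n+1)\Z$ — its group of relative classes is generated by the class of a half of a real line $\RP^1 \subset \CP^1 \subset \CP^n$, of Maslov index $n+1 \geq 3$ — so $\RP^n$ bounds no Maslov-index-$2$ disks, whence $\lambda_{\RP^n} = 0$ and $\mathcal{F}^{\mathbb{T}}(\RP^n)$ is an ungraded matrix factorization of $W$ itself. This already proves the first assertion of the theorem.

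To pin down the matrix for $n = 2$, first describe $CF(\mathbb{T}, \RP^2)$. The monotone fiber is the Clifford torus $\{|z_0| = |z_1| = |z_2|\}$, and it meets $\RP^2 = \{[x_0:x_1:x_2] : x_i \in \R\}$ transversally in the four points $[\pm 1 : \pm 1 : \pm 1]$, so $CF(\mathbb{T}, \RP^2) \cong \mathbb{K}^4$, matching the size of $Q_{\RP^2}$. It then remains to compute $m_1^{(b,0)} = \sum_{k\geq 0} m_{k+1}(b^{\otimes k}, -)$. Using the divisor axiom for the three toric divisors $D_0, D_1, D_2$ of $\CP^2$ — which is how the insertions of $b$ are resummed in \cite{CHL-toric}, the $k \geq 1$ terms being determined by the $k = 0$ ones — the differential takes the form $m_1^{(b,0)}(p) = \sum_q \big(\sum_\beta \#\mathcal{M}(p,q;\beta)\, z^{\partial\beta}\big)\, q$, where $\mathcal{M}(p,q;\beta)$ is the rigid part of the moduli space of holomorphic strips from $p$ to $q$ with boundary on $\mathbb{T} \cup \RP^2$ in class $\beta$ and $z^{\partial\beta} = x^{\beta\cdot D_0}\, y^{\beta\cdot D_1}\, (xy)^{-\beta\cdot D_2}$. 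The necessary strip enumeration is taken from \cite{AA}; reading it off in the ordered basis $\big([1:1:1],\,[1:1:-1],\,[1:-1:1],\,[1:-1:-1]\big)$, with $(x,y)$ normalized so that $W = x + y + (xy)^{-1}$, yields exactly the matrix \eqref{eq:1}. One then verifies directly that $Q_{\RP^2}^2 = W\cdot\mathrm{Id}_4$, which is at once an independent check and a consequence of $\big(m_1^{(b,0)}\big)^2 = (W - \lambda_{\RP^2})\cdot\mathrm{Id}$.

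The main obstacle is the holomorphic-strip analysis behind $m_1^{(b,0)}$: one must classify the moduli spaces $\mathcal{M}(p,q;\beta)$, check that only finitely many classes $\beta$ contribute to each matrix entry — so that the entries are Laurent monomials and not infinite series — and confirm that the higher terms $m_{k+1}(b^{\otimes k}, -)$ with $k \geq 1$ introduce nothing beyond the divisor-axiom repackaging. This is exactly the content we import from \cite{AA}; granting it, the remainder is bookkeeping — fixing compatible conventions for the ordering of the four intersection points, the area/orientation normalization producing $W$ in the stated form, and the identification of the Maurer--Cartan parameters with $(x,y)$ — after which matching with \eqref{eq:1} and the computation $Q_{\RP^2}^2 = W\cdot\mathrm{Id}_4$ are finite. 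As a consistency test one can appeal to the earlier Proposition: the doubling $\mathfrak{D}(Q_{\RP^2})$ should coincide with the mirror of the orientation double cover of $\RP^2$, which gives an independent constraint on the entries of \eqref{eq:1}.
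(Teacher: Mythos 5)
Your proposal follows essentially the same route as the paper's proof: take the Clifford torus as the reference Lagrangian, identify the four transverse intersection points with $\RP^2$, invoke the classification of rigid holomorphic strips from \cite{AA} (via doubling across the anti-holomorphic involution to Maslov-index-$2$ disks), and weight each strip by the holonomy of the local system determined by $b$ — which you phrase via the divisor axiom, where the paper tracks intersections with the hyper-tori $H_1, H_2$, two equivalent packagings of the same count. One bookkeeping caveat: the paper uses the ordered basis $(s,r,q,p) = ([1{:}-1{:}-1],[1{:}-1{:}1],[1{:}1{:}-1],[1{:}1{:}1])$, the reverse of the ordering you write, so you would need to either reverse your ordering or conjugate by the anti-diagonal permutation to land literally on the matrix \eqref{eq:1} as stated.
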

 
It was shown by Tonkonog \cite{ton18} (for all $n$) and Evans-Lekili \cite{EL} (for odd $n$ by a different method) that $\mathbb{R}P^n$ generates Fukaya category of $\mathbb{C}P^n$ of
potential value $0$.   The proof of Tonkonog was based on the following computation of
the closed-open string map being injective (hence an isomorphism).
$$QH^*(\mathbb{C}P^2) \to HF(\mathbb{R}P^2,\mathbb{R}P^2)$$

In this paper, we verify a mirror of this statement for the above ungraded matrix factorization.
The mirror of $\CP^2$ is the well known Laurent polynomial $W=x+y+x^{-1}y^{-1}$ (called Givental-Hori-Vafa potential. See \cite{CO} for a Floer theoretic construction of this mirror). 
The $B$-model analogue of the quantum cohomology ring $QH^*(\mathbb{C}P^2)$  is given by the Jacobian ring.
$$Jac(W) = \frac{ \mathbb{K}(x,y)}{(\partial_xW, \partial_y W)}$$
%It is well-known that $Jac(W)$ acts on matrix factorizations of $W$.

\begin{theorem}\label{thm:main}
Let $Q_{\mathbb{R}P^2}$ be the ungraded matrix factorization \eqref{eq:1} which is mirror to $\RP^2$.
Then the closed-open map $$\mathcal{C}:Jac(W) \to Hom_{H(\mathcal{MF}^{un}(W))}(Q_{\mathbb{R}P^2}, Q_{\mathbb{R}P^2}) $$ sending $\alpha \to \alpha \cdot \Id$
is an isomorphism.
\end{theorem}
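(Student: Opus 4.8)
The plan is to show that $\mathcal{C}$ is a well-defined homomorphism of unital $\K$-algebras with central image, that its source and target are both $3$-dimensional over $\K$, and that $\mathcal{C}$ is injective; an injective $\K$-linear map between $\K$-vector spaces of equal finite dimension is necessarily an isomorphism, so this suffices. The only formal point is well-definedness: on $R := \K[x^{\pm 1}, y^{\pm 1}]$ the assignment $\alpha \mapsto \alpha\cdot\Id$ is visibly a unital ring map into $(\operatorname{End}(Q_{\RP^2}), d)$, $d\psi = Q_{\RP^2}\psi + \psi Q_{\RP^2}$, landing in cocycles with central image; differentiating $Q_{\RP^2}^2 = W\cdot\Id$ and using $\operatorname{char}\K = 2$ gives $\partial_x W\cdot\Id = (\partial_x Q_{\RP^2})Q_{\RP^2} + Q_{\RP^2}(\partial_x Q_{\RP^2}) = d(\partial_x Q_{\RP^2})$, and similarly for $\partial_y W$, so the Jacobian ideal is sent to coboundaries and $\mathcal{C}$ descends to $Jac(W)$.

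For the source, a direct computation shows that in $R$ the ideal $(\partial_x W, \partial_y W)$ forces $x = y$ and $x^3 = 1$: here the diagonal of the Hessian of $W$ vanishes in characteristic $2$, but the Hessian determinant is the unit $x^{-4}y^{-4}$, so the three critical points of $W$ on the torus are nondegenerate. Hence $Jac(W) \cong \K[x^{\pm 1}]/(x^3 - 1)$, which is $3$-dimensional and reduced (a product of separable field extensions $\prod_p \kappa(p)$, one for each critical point).

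The heart of the matter is the computation $\dim_\K H(\operatorname{End}(Q_{\RP^2}), d) = 3$ together with injectivity of $\mathcal{C}$. I would analyze the complex $(M_4(R), d)$ directly. Two structural observations organize the calculation: $Q_{\RP^2}$ is itself a cocycle ($dQ_{\RP^2} = 2Q_{\RP^2}^2 = 0$), and $[Q_{\RP^2}]^2 = [Q_{\RP^2}^2] = \mathcal{C}(W)$; since $W \equiv x$ in $Jac(W)$ and $x$ is invertible there, one expects $H(\operatorname{End}(Q_{\RP^2}), d)$ to be the commutative algebra $\K[z]/(z^3-1)$ generated by $z = [Q_{\RP^2}]$, with $\{[\Id], [Q_{\RP^2}], [Q_{\RP^2}]^2\}$ a $\K$-basis and $[Q_{\RP^2}] = \mathcal{C}(x^2)$. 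Concretely I would (i) exhibit an explicit $\psi_0 \in M_4(R)$ with $d\psi_0 = Q_{\RP^2} + x^2\cdot\Id$, placing $[Q_{\RP^2}]$ in $\operatorname{im}\mathcal{C}$; (ii) prove $\mathcal{C}$ is surjective, i.e.\ every cocycle is cohomologous to a scalar matrix, by solving the relevant $R$-linear systems $d\psi = \Phi - \mathcal{C}(\alpha)$, where the Jacobian relations and the row-reduced form of $Q_{\RP^2}$ are what make these solvable; and (iii) prove $\mathcal{C}$ is injective, for which it suffices to check $\mathcal{C}(e)\neq 0$ for each primitive idempotent $e$ of $Jac(W) = \prod_p \kappa(p)$, equivalently that $Q_{\RP^2}$ is non-contractible after localizing and completing at each critical point $p$. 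At such a $p$ one has $\widehat{R}_p \cong \kappa(p)[[u,v]]$ with nondegenerate quadratic part of $W$ of the form $\propto uv$, so by the formal Morse lemma (valid in characteristic $2$ precisely because this quadratic form is nondegenerate, so the vanishing of the Hessian diagonal and the inseparability of $T^2 - W$ do not interfere) the localization of $Q_{\RP^2}$ is a matrix factorization of $uv$, and row reduction shows it is stably isomorphic there to the rank-one factorization $\left(\begin{smallmatrix} 0 & u \\ v & 0\end{smallmatrix}\right)$, whose endomorphism cohomology is $\kappa(p)$ generated by $\Id$. Summing over the three critical points recovers $\dim_\K = \sum_p [\kappa(p):\K] = 3$ with $\mathcal{C}$ the identity locally at each $p$. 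As a consistency check this matches $\dim_\K HF(\RP^2,\RP^2) = 3$ and the identification $Jac(W) \cong QH^*(\CP^2)$, so the statement is the $B$-side mirror of Tonkonog's isomorphism $QH^*(\CP^2) \xrightarrow{\ \sim\ } HF(\RP^2,\RP^2)$ \cite{ton18}.

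The main obstacle is step (iii) — equivalently the surjectivity bookkeeping in step (ii): one must control the moderately large explicit $R$-linear systems arising from $d = [Q_{\RP^2}, -]$, and the cleanest route is through the localization at the three nondegenerate critical points, where the Kn\"orrer-type local model $uv$ makes the computation transparent, but where one must verify carefully that $Q_{\RP^2}$ localizes to a \emph{single} rank-one factorization rather than to a larger matrix factorization of $uv$ (the latter would overcount the dimension). Once the target is pinned to dimension $3$ with $\{[\Id], [Q_{\RP^2}], [Q_{\RP^2}]^2\}$ a basis, the theorem follows formally.
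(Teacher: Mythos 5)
Your overall strategy is genuinely different from the paper's. The paper proves this by a direct, global computation: it writes $Q_{\RP^2}$ in $2\times 2$ block form $\left(\begin{smallmatrix} U & V \\ xV & U \end{smallmatrix}\right)$ with $U = \left(\begin{smallmatrix} 0 & 1 \\ y & 0 \end{smallmatrix}\right)$, $V = \Id + x^{-1}y^{-1}U$, shows $\delta_U$ is acyclic, rewrites the cocycle condition $\delta_Q(f)=0$ in terms of the ``trace'' and ``anti-trace'' of $2\times 2$ blocks, and solves the resulting linear system to show every cocycle is cohomologous to $\alpha\cdot\Id$ and that $\alpha\cdot\Id$ exact forces $\alpha$ into the Jacobian ideal. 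Your plan — localize at the three nondegenerate critical points, identify a local Kn\"orrer-type model, and conclude $\dim_\K = 3$ with $\mathcal{C}$ an isomorphism — is a reasonable alternative, and your well-definedness and $\textrm{Jac}(W)\cong\K[x]/(x^3-1)$ steps are fine. But there are real gaps.

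First, the local quadratic model is not $uv$. Expanding $W - W(p)$ at $p=(\xi,\xi)$, $\xi^3=1$, with $x=\xi(1+u)$, $y=\xi(1+v)$ gives $W - W(p) = \xi(u^2+uv+v^2) + O(3)$. The fact that the Hessian has zero diagonal in characteristic 2 is automatic (it is alternating) and does not imply the quadratic form itself is hyperbolic; here it is the Arf-invariant-$1$ form, which is $\cong uv$ only over fields containing $\mathbb{F}_4$, not over $\mathbb{F}_2$. So ``row reduction shows it is stably isomorphic to $\left(\begin{smallmatrix}0&u\\v&0\end{smallmatrix}\right)$'' cannot literally be correct over $\kappa(p)=\mathbb{F}_2$; you would either need to base change to $\bar\K$ and descend, or work with the correct local model $\left(\begin{smallmatrix}u & v \\ u+v & u\end{smallmatrix}\right)$. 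Second, $Q_{\RP^2}^2 = W\cdot\Id$, not $(W-W(p))\cdot\Id$, and $W(p)=\xi\neq 0$; the localization is a factorization of $\xi + (\text{quadratic}) + \cdots$, not of the quadratic alone. The fix is the characteristic-$2$ shift $Q\mapsto Q+\xi^2\Id$ (valid since $(\xi^2)^2=\xi$ and the shift does not change $[Q,-]$), but this observation is missing, and absent it the whole local analysis is ill-posed. Third — and you flag this yourself — the central computation, that the completion of $Q_{\RP^2}$ at each $p$ is stably a \emph{single} rank-2 factorization rather than two (which would overcount by a factor of four in each $\textrm{End}$), is not carried out; that is exactly where the work lies and is the content the paper supplies by its explicit block-matrix manipulation. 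So the plan is plausible in outline but incomplete, with the two technical points above (critical-value shift, anisotropic local model / base change) needing to be fixed before the localization argument can run.
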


Our main result is the following version of Homological Mirror Symmetry in this example.

\begin{theorem}\label{thm:main2}
The localized mirror functor induces an embedding of the derived categories
\[D^\pi\mathcal{F}^\bL: D^\pi\Fuk_0(\CP^2) \to D^\pi\MF^{un}(W=x+y+\frac{1}{xy}).\]
\end{theorem}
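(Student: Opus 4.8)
The plan is to deduce the embedding of derived categories from Theorem \ref{thm:main} by the standard "generation plus full faithfulness on a generator" argument used in the localized mirror functor formalism of \cite{CHL17, CHL-toric}. First I would recall that the localized mirror functor $\mathcal{F}^\bL$ is an $A_\infty$-functor $\Fuk_0(\CP^2) \to \MF^{un}(W)$, defined using the fixed reference Lagrangian $\bL$ (here a suitable torus fiber) by pairing an object with $\bL$ via the Floer/$\m_k$-operations; by construction it sends $\RP^2$ to the ungraded matrix factorization $Q_{\RP^2}$ of \eqref{eq:1}. The key input is that, by the work of Tonkonog \cite{ton18}, $\RP^2$ split-generates the summand $\Fuk_0(\CP^2)$ of the Fukaya category corresponding to potential value $0$; this is exactly the statement that $QH^*(\CP^2) \to HF^*(\RP^2,\RP^2)$ is an isomorphism, which is the $A$-side counterpart of Theorem \ref{thm:main}. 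Hence on $D^\pi$ it suffices to check that $\mathcal{F}^\bL$ is cohomologically full and faithful on the single object $\RP^2$, i.e. that the induced map
\[
HF^*(\RP^2,\RP^2) \longrightarrow \mathrm{Hom}_{H(\MF^{un}(W))}\bigl(Q_{\RP^2},Q_{\RP^2}\bigr)
\]
is an isomorphism of (ungraded) algebras over $\K$.

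The second step is to identify the source of this map with $Jac(W)$. On the $A$-side, $HF^*(\RP^2,\RP^2) \cong QH^*(\CP^2)$ by Tonkonog, and the closed-open map realizes this isomorphism; on the $B$-side, $QH^*(\CP^2)$ is mirror to $Jac(W)$. Under these identifications, the functor map $HF^*(\RP^2,\RP^2)\to \mathrm{Hom}(Q_{\RP^2},Q_{\RP^2})$ becomes precisely the closed-open map $\mathcal{C}\colon Jac(W)\to \mathrm{Hom}_{H(\MF^{un}(W))}(Q_{\RP^2},Q_{\RP^2})$, $\alpha \mapsto \alpha\cdot\Id$, of Theorem \ref{thm:main}. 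That theorem asserts $\mathcal{C}$ is an isomorphism, so $\mathcal{F}^\bL$ is cohomologically full and faithful on $\RP^2$. I would need to be careful here that the diagram relating the CO-map on the $A$-side, the mirror functor, and the algebraic map $\alpha\mapsto\alpha\cdot\Id$ genuinely commutes — this is a compatibility statement between $\mathcal{F}^\bL$ and the closed-open string maps, which in this formalism follows from the fact that $\mathcal{F}^\bL$ is a (curved) $A_\infty$-functor whose linear term is built from the same moduli spaces that define the CO-map, but it should be spelled out.

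The third step is to upgrade from "fully faithful on one object" to "embedding of idempotent-completed derived categories." Since $\RP^2$ split-generates $\Fuk_0(\CP^2)$, the triangulated (in fact $A_\infty$-) category $D^\pi\Fuk_0(\CP^2)$ is the idempotent completion of the category of twisted complexes on the one-object $A_\infty$-algebra $CF^*(\RP^2,\RP^2)$. An $A_\infty$-functor that is a quasi-isomorphism on this algebra induces a fully faithful embedding on twisted complexes, hence on their idempotent completions; thus $D^\pi\mathcal{F}^\bL$ is fully faithful onto the thick subcategory of $D^\pi\MF^{un}(W)$ split-generated by $Q_{\RP^2}$, which gives the desired embedding. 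I expect the main obstacle to be purely foundational rather than computational: setting up $\MF^{un}(W)$ as an honest ($\Z/2$-ungraded, characteristic-two) $A_\infty$- or dg-category in which "twisted complexes / idempotent completion" and "$D^\pi$" make sense, and verifying that the localized mirror functor of \cite{CHL-toric} lands in it and is compatible with closed-open maps in the non-orientable setting. Once that framework is in place, the argument above is a formal consequence of Theorem \ref{thm:main} together with Tonkonog's generation result.
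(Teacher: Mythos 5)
Your overall strategy --- Tonkonog's split-generation of $\Fuk_0(\CP^2)$ by $\RP^2$, plus full faithfulness of $\mathcal{F}^\bL$ on that single object, plus the formal passage to twisted complexes and idempotent completion --- is exactly the skeleton of the paper's argument. But the crucial middle step, where you produce the isomorphism $HF(\RP^2,\RP^2)\to \mathrm{End}_{H(\MF^{un}(W))}(Q_{\RP^2})$, rests on a claim the paper explicitly declines to assert: you want to identify the action of $\mathcal{F}^\bL$ on endomorphisms with the map $\mathcal{C}:Jac(W)\to \mathrm{End}(Q_{\RP^2})$ of Theorem \ref{thm:main} via the Kodaira--Spencer isomorphism and the closed-open map, i.e. you want the diagram \eqref{diagram:oc} relating $\mathcal{F}^\bL$, $\OC$/$\mathcal{CO}$, $\mathfrak{ks}$, and $\OC_{\MF}$ to commute. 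The paper states outright that this compatibility, proved in \cite{CLS} over characteristic $0$ using \cite{FOOO_MS}, does \emph{not} directly transfer to characteristic $2$ and the non-orientable setting, and that the entire diagram is ``at this point conjectural.'' You acknowledge the need to ``spell out'' the commutativity, but then assert it follows from the structure of the $A_\infty$-functor; that inference is precisely the open foundational gap the authors are flagging, so as written your Step~2 is not a proof.

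The paper sidesteps this by proving Proposition \ref{prop:FLiso} directly: it computes $\mathcal{F}^\bL(\alpha)=\m_2^{b(x),0,0}(-,\alpha)$ explicitly as a $4\times 4$ matrix \eqref{eq:2} using the Morse--Bott model, exhibits a matrix $M$ with $[Q,M]=\mathcal{F}^\bL(\alpha)+x^{-1}\Id$, and concludes $\mathcal{F}^\bL(\alpha)\simeq x^2\cdot\Id$ in cohomology. Combined with Haug's identification $HF(\RP^2,b=0)\cong \K[\alpha]/(\alpha^3-1)$ and the paper's Theorem \ref{thm:mf_rp2} computing $\mathrm{End}(Q_{\RP^2})\cong\K[x]/(x^3-1)$, this gives the isomorphism on the nose without invoking any closed-open compatibility. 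So the fix is to replace your conjectural diagram-chase with the explicit calculation that $\mathcal{F}^\bL(\alpha)$ is cohomologous to a unit multiple of the identity, which is what Proposition \ref{prop:FLiso} supplies. (Your Steps~1 and~3 then go through essentially verbatim, matching the paper; note also that, as the paper phrases it, the embedding is all that can be asserted unconditionally --- upgrading to an equivalence would require Conjecture \ref{conj} that $Q_{\RP^2}$ split-generates $\MF^{un}(W)$.)
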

We except this functor to be an equivalence - see Section 4 for more details.	

We end this introduction by mentioning some standard features of Fukaya categories and categories of matrix factorizations that fail in our setting - ungraded and over a field of characteristic two. We believe these are very interesting phenomena that deserve further investigation.

We explained that Fukaya category of monotone Lagrangian submanifolds should be considered for each potential value $\lambda$ separately, and it was denoted by $ \Fuk_\lambda(M)$.
It is well-known (see \cite{She_fano} for a proof) that in characteristic 0, $ \Fuk_\lambda(M)$ is non-trivial only when $\lambda$ is an eigenvalue of the $c_1(TM)$ action on quantum cohomology of $M$. 
Similarly, it is well-known that over characteristic 0, the category of matrix factorization of $W$ should be considered separately for each critical value $\lambda$ as a matrix factorization of $W -\lambda$.
Both of these breaks down over characteristic two, and let us explain this phenomenon.

First, let us consider the case of  matrix factorizations, when $\lambda=0$ for simplicity.
When working over a field of $char\neq 2$, the identity 
$$d_{Q,Q}(Q)=2W \cdot \textrm{Id}_E \;\;\;\;  \Longrightarrow \;\;\;\; d_{Q,Q}(\frac{1}{2W}Q)=\textrm{Id}_E$$ after inverting $W$. This implies that the support of $(E, Q)$ is contained in $W^{-1}(0)$. Moreover, Orlov \cite{Orl} showed that the cohomology of the matrix factorization category is equivalent to $D^b_{sing}(W^{-1}(0))$, the singularity category of $W^{-1}(0)$.

Theorem \ref{thm:main} gives an example of an ungraded matrix factorization (in characteristic 2) whose support is contained in the critical locus of $W$, but it is not contained in $W^{-1}(0)$. In fact $W^{-1}(0)$ is smooth, therefore Orlov's result cannot apply to this case.
% but the closed-open map to the matrix factorization for the critical value 0  is still an isomorphism. This phenomenon can happen only in characteristic two.
It would be interesting to know if the cohomology of the ungraded matrix factorization category admits an alternative description as some kind of singularity category.

Second, the argument in the case of Fukaya category rests on the identity that over characteristic 0, the Maslov cycle can be represented as  the twice of the first Chern class $c_1(TM)$, see \cite{She_fano} for details.
But fails in characteristic two for non-orientable Lagrangians. It would be very interesting to find a new criterion for which values $\lambda$ give non-trivial components of the Fukaya category in this setting.

This paper is organized as follows: in Section 2 we introduce the category of ungraded matrix factorizations; in Section 3 we discuss the localized mirror functor in characteristic two; in Section 4 we consider our main example $\mathbb{R}P^2 \subset\mathbb{C}P^2$ and prove Theorems 1.4, 1.5 and 1.6; finally in Section 5 we consider the $A_n$-singularity example.

\section{Category of ungraded matrix factorizations}
Let $\mathbb{K}$ be a  field of characteristic two, and let
$A =\mathbb{K}[z_1^{\pm 1}, \cdots,z_n^{\pm 1}],$
be the ring of Laurent polynomials (in $n$ variables) over $\mathbb{K}$.
Recall \cite{Orlov} that a $\Z/2$-graded matrix factorization of $W\in A$ is  given by a $\Z/2$-graded finite dimensional free $A$-module $E=E^0\oplus E^1$
together with an odd map $Q: E^\bullet \to E^{\bullet+1}$ such that $Q^2 = W \cdot \textrm{Id}_E$.
%and a  morphism between $(P^\bullet, d_P), (Q^\bullet, d_Q)$ is given by an $A$-module homomorphism

We now define the differential category of ungraded matrix factorizations. By differential category we mean a category for which the set of morphisms $Hom(X,Y)$ are $\mathbb{K}$-vector spaces equipped with a linear map $d_{X,Y}:Hom(X,Y)\to Hom(X,Y)$ which satisfies $d_{X,Y}^2=0$. Moreover composition satisfies 
$$d_{X,Z}(g\circ f)= d_{Y,Z}(g)\circ f + g\circ d_{X,Y}(f),$$
and the identity morphisms are closed: $d_{X,X}(\one_X)=0$. In other words, it is the same as a dg-category if one drops the grading conditions.

To a differential category $\mathcal{M}$ we can associate its homotopy category $H(\mathcal{M})$. This is a $\mathbb{K}$-linear category with the same objects as $\mathcal{M}$ and morphisms given by 
$$Hom_{H(\mathcal{M})}(X, Y):= H(Hom(X,Y))= \frac{Ker(d_{X,Y})}{Im(d_{X,Y})}.$$

\begin{defn}
 An \emph{ungraded matrix factorization} of $W \in A$ is a pair $(E,Q)$  consisting of a  finite dimensional free $A$-module $E$ and an $A$-module homomorphism $Q: E \to E$ satisfying $$Q^2 = W \cdot \textrm{Id}_E.$$ 
A morphism  between two ungraded matrix factorizations $(E, Q), (F, R)$ is simply an $A$-module homomorphism $f:E \to F$. On the space of morphisms $Hom((E, Q), (F, R))$ we define the differential $d_{Q,R}$ by the formula
$$d_{Q,R} (f) = R \circ f +  f \circ Q.$$

We denote by $\mathcal{MF}^{un}(W)$ the differential category whose objects are ungraded matrix factorizations of $W$ and morphisms, as defined above, are composed in the obvious way.
\end{defn}

It is a simple exercise to check that $\mathcal{MF}^{un}(W)$ is in fact a differential category. Note that we do not need the usual sign factor $(-1)^{|f|}$ in the definition of $d$ since we are in characteristic two.
 
Let us compare $\mathcal{MF}^{un}(W)$ with the standard category of graded matrix factorizations. In both categories, any matrix factorization is supported on $Crit(W)$ the critical locus of $W$. This means that the module $H(Hom((E, Q), (E, Q)))$ is supported on $Crit(W)$. To see this we differentiate the identity $Q\circ Q =W\cdot \textrm{Id}_E$ with respect  to the variable $z_i$ to obtain 
\begin{equation}\label{eq:Qdiff}
\frac{\partial Q}{\partial z_i} \circ Q + Q \circ \frac{\partial Q}{\partial z_i} =  \frac{\partial W}{\partial z_i}\cdot \textrm{Id}_E.
\end{equation}
Away from $Crit(W)$, we can invert $ \frac{\partial W}{\partial z_i}$, and the above equation gives $d_{Q,Q}( (\frac{\partial W}{\partial z_i})^{-1}  \frac{\partial Q}{\partial z_i})= \textrm{Id}_E$, which implies that the $d_{Q,Q}$ cohomology vanishes.

On the other hand, since we are working over a field of $char=2$, the support of $(E, Q)$ is not necessarily contained in $W^{-1}(0)$.  In the case, when $char\neq 2$, the identity $d_{Q,Q}(Q)=2(W\textrm{Id}_E$ gives $d_{Q,Q}(\frac{1}{2W}Q)=\textrm{Id}_E$ after inverting $W$. This implies that the support of $(E, Q)$ is contained in $W^{-1}(0)$. This argument obviously fails in characteristic two and we will see an example where the support is entirely outside $W^{-1}(0)$.

Another similarity is that, just like for standard matrix factorizations, the Jacobian ring (or Milnor ring) of $W$, $\displaystyle\textrm{Jac}(W):=\frac{A}{\langle\frac{\partial W}{\partial z_1},\ldots \frac{\partial W}{\partial z_n} \rangle}$ acts on the homotopy category of ungraded matrix factorizations.

\begin{lemma}
For any pair of objects $(E, Q)$, $(F,R)$ in $\mathcal{MF}^{un}(W)$, the vector space 
$$H(Hom((E, Q),(F, R)))$$ is a module over $\textrm{Jac}(W)$.
\end{lemma}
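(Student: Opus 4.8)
The plan is to exhibit $H(Hom((E,Q),(F,R)))$ first as a module over the Laurent polynomial ring $A$, and then to show that the action of the Jacobian ideal is trivial, so that the module structure descends to $\textrm{Jac}(W) = A/\langle \frac{\partial W}{\partial z_1},\ldots,\frac{\partial W}{\partial z_n}\rangle$.

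\textbf{Step 1.} Since $A$ is commutative and every morphism in $\mathcal{MF}^{un}(W)$ is by definition an $A$-module homomorphism, scalar multiplication makes $Hom((E,Q),(F,R))$ an $A$-module. I would first check that this action commutes with the differential: for $a\in A$ and $f\in Hom((E,Q),(F,R))$, the $A$-linearity of $Q$ and $R$ gives
\[ d_{Q,R}(a\cdot f)= R\circ(a f)+(a f)\circ Q = a\,(R\circ f) + a\,(f\circ Q)=a\cdot d_{Q,R}(f). \]
Hence multiplication by $a$ is a chain endomorphism of $(Hom((E,Q),(F,R)),d_{Q,R})$, and $H(Hom((E,Q),(F,R)))$ inherits an $A$-module structure.

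\textbf{Step 2 (the key point).} It remains to show that each $\frac{\partial W}{\partial z_i}$ annihilates this cohomology. Differentiating the identity $R^2=W\cdot\textrm{Id}_F$ with respect to $z_i$ (formal differentiation of Laurent polynomials, valid in any characteristic, together with the Leibniz rule for products of matrices) yields
\[ \frac{\partial R}{\partial z_i}\circ R + R\circ \frac{\partial R}{\partial z_i}=\frac{\partial W}{\partial z_i}\cdot\textrm{Id}_F, \]
exactly as in \eqref{eq:Qdiff}. Let $f\in Hom((E,Q),(F,R))$ be $d_{Q,R}$-closed; since we are in characteristic two this means precisely $R\circ f = f\circ Q$. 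Composing the previous identity with $f$ on the right and substituting $R\circ f = f\circ Q$ in the first term gives
\[ \frac{\partial W}{\partial z_i}\cdot f = \frac{\partial R}{\partial z_i}\circ f\circ Q + R\circ\Big(\frac{\partial R}{\partial z_i}\circ f\Big)= d_{Q,R}\Big(\frac{\partial R}{\partial z_i}\circ f\Big). \]
Therefore $\frac{\partial W}{\partial z_i}\cdot[f]=0$ in $H(Hom((E,Q),(F,R)))$ for every $i$, and the $A$-action factors through $\textrm{Jac}(W)$, which is the assertion of the lemma. Symmetrically, one may instead use the nullhomotopy $f\circ\frac{\partial Q}{\partial z_i}$ obtained by differentiating $Q^2=W\cdot\textrm{Id}_E$; both routes give the same conclusion.

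I do not expect a genuine obstacle here: the argument is the characteristic-two, ungraded shadow of the standard fact that $\textrm{Jac}(W)$ acts on morphism spaces in the category of matrix factorizations. The only points requiring care are inserting the closedness relation $R\circ f = f\circ Q$ in the correct slot, and noting that all the signs appearing in the graded version of this computation disappear over a field of characteristic two; no new input (such as finiteness of $E$, $F$, or smoothness of $W^{-1}(0)$) is needed.
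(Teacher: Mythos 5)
Your proof is correct and essentially the same as the paper's: the paper differentiates $Q^2 = W\cdot\textrm{Id}_E$ and composes $f$ on the left to obtain the nullhomotopy $f\circ\frac{\partial Q}{\partial z_i}$, which is exactly the symmetric variant you mention at the end. Your Step 1 (checking the $A$-action is a chain map) is a helpful preliminary that the paper leaves implicit, but the key mechanism — differentiate the matrix-factorization identity and insert $R\circ f = f\circ Q$ to produce a $d_{Q,R}$-primitive of $\frac{\partial W}{\partial z_i}\cdot f$ — is identical.
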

\begin{proof}
The standard proof still applies. Given a closed morphism $f\in Hom((E, Q),(F, R))$, compose it on the left of both sides of Equation (\ref{eq:Qdiff}). Using the condition $R\circ f=f\circ Q$ we obtain $d_{R,Q}(f\circ \frac{\partial Q}{\partial z_i})= \frac{\partial W}{\partial z_i}f$.
This shows that the action is well-defined on the homotopy category.
\end{proof}

Next we observe that there are functors between $\Z/2$-graded and ungraded matrix factorizations. There is an obvious functor $\mathfrak{F}: \mathcal{MF}(W) \to \mathcal{MF}^{un}(W)$ which simply forgets the grading. It induces a functor on the cohomology level $\mathfrak{F}: H^0(\mathcal{MF}(W)) \to H(\mathcal{MF}^{un}(W))$, which we still denote by $\mathfrak{F}$.  On the other direction there is a \emph{doubling} functor $\mathfrak{D}: H(\mathcal{MF}^{un}(W)) \to H^0(\mathcal{MF}(W))$, which sends $(E,Q)$ to $(E^\bullet:= E\oplus E, Q_\bullet)$ with $Q_\bullet= \left(\begin{array}{cc}
0    & Q \\
Q & 0    
\end{array}\right)$.  On the level of morphisms $\mathfrak{D}(f):=\left(\begin{array}{cc}
f    & 0 \\
0 & f    
\end{array}\right)$.

The following is easy to check.
\begin{lemma}
$\mathfrak{D}$ is both left and right adjoint to $\mathfrak{F}$. That is, there are natural isomorphisms:
$$Hom_{H(\mathcal{MF}^{un}(W))}(\mathfrak{F}(X), Y) \cong Hom_{H^0(\mathcal{MF}(W))}(X, \mathfrak{D}(Y))$$
$$Hom_{H(\mathcal{MF}^{un}(W))}(Y,\mathfrak{F}(X)) \cong Hom_{H^0(\mathcal{MF}(W))}(\mathfrak{D}(Y), X),$$
for any matrix factorizations $X$ in $\mathcal{MF}(W)$ and $Y$ in $\mathcal{MF}^{un}(W)$.
\end{lemma}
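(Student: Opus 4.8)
The plan is to prove the two adjunction isomorphisms by writing both sides very explicitly as $A$-module maps and identifying the closed/exact morphisms on each side. Since $\mathfrak{D}(Y) = (E \oplus E, Q_\bullet)$ is $\Z/2$-graded with both summands switched by $Q_\bullet$, a morphism of graded matrix factorizations $f : X \to \mathfrak{D}(Y)$ (say $X = (F^0 \oplus F^1, R)$) is a pair of block components, and the grading-preserving condition forces $f$ to have the shape $f = \begin{pmatrix} a & 0 \\ 0 & b \end{pmatrix}$ relative to a suitable decomposition, with $a, b : F^\bullet \to E$. First I would unwind the chain-level statement: show that the assignment sending such an $f$ to, say, its component $a : F^0 \to E$ (landing in the first copy of $E$) is an isomorphism of cochain complexes $\big(\mathrm{Hom}_{\mathcal{MF}(W)}(X, \mathfrak{D}(Y)), d\big) \xrightarrow{\ \sim\ } \big(\mathrm{Hom}_{\mathcal{MF}^{un}(W)}(\mathfrak{F}(X), Y), d_{R, Q}\big)$, where I have to check that the differentials match up — the graded differential on the left, built from $R$ and $Q_\bullet$, restricts on the $a$-component to exactly $d_{R,Q}(a) = Q \circ a + a \circ R$ (no signs, characteristic two). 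Passing to cohomology then gives the stated natural isomorphism of $\mathrm{Hom}$-spaces in the homotopy categories.

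The second isomorphism, $\mathrm{Hom}_{H(\mathcal{MF}^{un}(W))}(Y, \mathfrak{F}(X)) \cong \mathrm{Hom}_{H^0(\mathcal{MF}(W))}(\mathfrak{D}(Y), X)$, I would handle symmetrically: a graded morphism $g : \mathfrak{D}(Y) \to X$ has components $g = (g_0, g_1)$ with $g_0 : E \to F^?$ and $g_1 : E \to F^?$ appropriately placed by the grading, and I would show $g \mapsto g_0$ (the component out of the first copy of $E$) is again a chain isomorphism onto $\mathrm{Hom}(\mathfrak{F}(X), \dots)$ — more precisely onto $\mathrm{Hom}_{\mathcal{MF}^{un}(W)}(Y, \mathfrak{F}(X))$ with its differential $d_{Q, R}$. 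Naturality in both variables is then a matter of checking that these component-extraction maps commute with pre- and post-composition by (doublings of) morphisms, which is immediate from the block-diagonal form of $\mathfrak{D}$ on morphisms. I would also remark that $\mathfrak{D}$ being simultaneously left and right adjoint to $\mathfrak{F}$ is exactly what one expects from the analogous picture for dg-categories where the "doubling/polarization" functor is self-dual; the reason both adjunctions hold here, rather than just one, is that $\mathfrak{D}(Y)$ is its own shift (up to the grading swap), so there is no Serre-twist asymmetry.

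The only genuinely delicate point — and the one I would write out carefully rather than leave to the reader — is the sign/grading bookkeeping in the chain isomorphism, i.e. verifying that extracting one block component really is a chain map and really is bijective. Bijectivity at chain level is where one uses that $Q_\bullet$ interchanges the two copies of $E$: given an arbitrary $a : F^\bullet \to E$ one reconstructs the full graded morphism $f$ by inserting $a$ (or $Q \circ a$, depending on the slot) into the off-diagonal-compatible position, and one must check this reconstruction lands in grading-preserving maps and is inverse to extraction. Because we are over characteristic two all the potential sign discrepancies vanish, so this step is more tedious than hard; I expect it to be the main obstacle only in the sense of requiring a clean choice of conventions. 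Once the chain isomorphisms are in place, taking $H(-)$ and invoking Lemma (the $\mathrm{Jac}(W)$-module structure) if one wants the isomorphisms to be $\mathrm{Jac}(W)$-linear, finishes the proof.
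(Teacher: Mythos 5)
The paper states this lemma without proof, asserting it is ``easy to check,'' so there is no argument to compare line by line; your proposal amounts to supplying that check. Your overall strategy --- unwind both $\mathrm{Hom}$-complexes in block form and produce a chain-level isomorphism --- is the right one, but the specific map you write down has a genuine error. You propose to send a grading-preserving morphism $f : X \to \mathfrak{D}(Y)$, with blocks $a : F^0 \to E^0$ and $b : F^1 \to E^1$, to the single block $a : F^0 \to E$, and you call this an isomorphism of chain complexes. It is not even injective: it discards $b$ entirely. The correct map sends $f$ to the full pair $(a,b)$, i.e.\ to $\mathfrak{F}(f) \in \mathrm{Hom}_A(F^0 \oplus F^1, E)$. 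Because $E^0 = E^1 = E$ and both blocks of $Q_\bullet$ are the \emph{same} map $Q$, each of $\mathrm{Hom}^0(X, \mathfrak{D}(Y))$ and $\mathrm{Hom}^1(X, \mathfrak{D}(Y))$ is canonically $\mathrm{Hom}_A(F^0,E)\oplus\mathrm{Hom}_A(F^1,E) = \mathrm{Hom}_A(F^0\oplus F^1, E)$, and under these identifications both graded differentials literally become the ungraded differential $d_{R,Q}(g) = Q\circ g + g\circ R$. That is the whole content of the chain-level isomorphism.

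Once you make this repair the reconstruction step is trivial: you place $a$ and $b$ back on the diagonal, and nothing else. There is no role for $Q \circ a$, and your invocation of it (``inserting $a$ or $Q\circ a$ depending on the slot'') is a symptom of trying to synthesize the missing block $b$ from $a$ alone; that is neither needed nor possible at the chain level, since any relation between $a$ and $b$ holds only for \emph{closed} $f$ and would still require inverting a block of $R$. The same correction applies symmetrically to the second adjunction: record both blocks of $g : \mathfrak{D}(Y)\to X$, not one. With that change your remarks on naturality and on characteristic two removing all signs are correct, and the argument becomes the intended one.
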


\section{Fukaya category and localized mirror functor in characteristic two}

When working in characteristic zero, in order to define the Fukaya category $\Fuk(M)$ of a symplectic manifold (or orbifold) $M$, 
one needs to orient coherently the relevant moduli spaces of pseudo-holomorphic polygons. To guarantee the existence of these orientations one is forced to impose orientability conditions on the Lagrangians themselves \cite{FOOO}.

In our setup, working over a field of characteristic two, there are no signs to be assigned and one can ignore the problem of orientations. Then one can include non-orientable Lagrangians in the Fukaya category. However, working in positive characteristic requires us to avoid using virtual techniques that lead to rational counts when dealing with the moduli spaces of pseudo-holomorphic maps. Fukaya--Oh--Ohta--Ono \cite{FOOOZ} have shown that this can be done if one restricts to \emph{spherically positive} symplectic manifolds. These are symplectic manifolds with compatible almost complex structures for which all pseudo-holomorphic spheres with non-positive Chern number are constant. Examples of this include: monotone symplectic manifolds and exact symplectic manifolds (or orbifold quotients of these). 

In this paper we will restrict to the monotone and exact cases. This has the added advantage of allowing us to avoid Novikov rings and simply work over a field $\K$ of characteristic two. A convenient setup for the monotone Fukaya category is given in \cite{She_fano} for example.

Objects in the Fukaya category are given by pairs $(L,b)$, where $L$ is a Lagrangian (monotone or exact) and $b$ is a weak bounding cochain (see \cite{FOOO, She_fano} for details). For such pair the corresponding curvature term $\m_0(L,b)=\mathfrak{P}(b) \one_L$, where $\one_L$ is the identity element and $\mathfrak{P}(b) $ is an element in $\K$, called the disk potential. In order to obtain a genuine $A_\infty$ category, as opposed to a curved one, one needs to fix the value of the potential. This means we have the decomposition
\begin{equation}
	\Fuk(M) = \Pi_\lambda \Fuk_\lambda(M),
\end{equation}
where objects in $\Fuk_\lambda(M)$ are pairs $(L,b)$ with $\mathfrak{P}(b)=\lambda$. We would like to point out that, by design, this $\AI$-categories are ungraded, meaning the Hom spaces in these categories are just vector spaces (not graded). Since we are working over a field of characteristic 2, this causes no problems, as all the signs that appear in the $\AI$ equations, and which usually depend on the degrees of the various morphisms, can be ignored. Therefore all the usual notions of $\AI$-category carry over to the ungraded (and characteristic 2) setting without changes.

%Recall that  Floer cohomology of monotone Lagrangian submanifolds  has been defined by Oh \cite{Oh1}, and its general definition (and obstruction to do so) has been developed by Fukaya-Oh-Ohta-Ono. We will restrict ourselves to monotone Lagrangian submanifolds in this paper.
%Here a Lagrangian is called monotone if  a Maslov index homomorphism $I_\mu: \pi_2(M,L) \to \Z$ and a symplectic area homomorphism $I_\omega : \pi_2(M,L) \to \R$ are positively proportional to each other: $I\mu = \lambda I_\omega$ for $\lambda >0$. The positive generator of the image of $I_\mu$ in $\Z$ is called the minimal Maslov number.
%In the case of  Floer cohomology between monotone Lagrangians whose minimal Maslov number is at least 3 is well-defined, but in the case of 2, Floer complex may be obstructed.
%Namely, let $\lambda_L$ be the count of $J$-holomorphic discs with boundary on $L$ of Maslov index two. Then in \cite{Oh1a}, it was observed that  Floer cochain complex $CF(L_1,L_2)$ is obstructed in the sense that $$\delta_{CF}^2 = \lambda_{L_2} - \lambda_{L_1}.$$
%It is denoted by $m_0(1)$ by Fukaya-Oh-Ohta-Ono.

%Thus, it is natural to consider the Fukaya category $\mathcal{F}_\lambda(M)$, whose objects are monotone Lagrangians (with the same monotonicity constant) whose $m_0(1)$ equals $\lambda$.

The second author together with Hong and Lau developed a localized mirror functor formalism \cite{CHL17}, \cite{CHL-toric}, which gives a geometric
homological mirror symmetry $\AI$-functor using Lagrangian deformation theory. Let us very briefly review this construction. 
%assume that we are in the setting of these references and assume thatreaders are familiar with the construction thereof.  
We first fix a reference Lagrangian $\bL$, for which there is a family of bounding cochains parametrized by a $\K$-vector space $E$, that is for each $x \in E$, there is a bounding cochain $b(x)$ for the Lagrangian $\bL$. The disk potential then defines a function on $E$, which we denote by $W: E \to \K$, by setting $W(x)=\mathfrak{P}(b(x))$. We assume that $W$ is a Laurent polynomial. 
The localized mirror functor is an $A_\infty$-functor 
$$\mathcal{F}^\bL: \Fuk_\lambda(M) \to \MF(W-\lambda).$$
On the level of objects, this functor sends an object $(L,b)$, where $L$ is assumed to be transverse to $\bL$, to the deformed Floer complex $$\mathcal{F}^\bL(L,b):=CF\big((\bL,b(x)),(L,b); \m_1^{b(x),b}\big).$$
The $A_\infty$ equations imply $(\m_1^{b(x),b})^2=(W(x)-\lambda)Id$, which shows $\mathcal{F}^\bL(L,b)$ is a matrix factorization. The only difference, from the standard case is that, since $L$ is non-orientable, the Floer complex is not (in general) $\Z/2$-graded.

From the previous discussion and our definition of ungraded matrix factorization, we obtain the following result.
\begin{prop}
The localized mirror functor $\mathcal{F}^\bL: \Fuk_\lambda(M) \to \MF^{un}(W-\lambda)$ is an $A_\infty$-functor which takes non-orientable Lagrangians to
ungraded matrix factorizations.
\end{prop}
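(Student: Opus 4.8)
The plan is to observe that the bulk of the work is already contained in \cite{CHL17,CHL-toric}, and to check that it survives the passage to characteristic two together with the identification of the target with $\MF^{un}$. The localized mirror functor has components $\mathcal{F}^\bL_k$ defined by counting pseudo-holomorphic polygons with one boundary arc on $\bL$ (decorated by the bounding cochains $b(x)$) and the remaining arcs on $L_0,\dots,L_k$, and the $\AI$-functor equations are verified in those references. By the discussion of Section 3, in the spherically positive setting (which includes the monotone and exact cases we work with) these moduli spaces are regular without virtual perturbations, and since there are no orientation data to assign, every identity used in loc.\ cit.\ remains valid after simply erasing the signs. Hence $\mathcal{F}^\bL$ is an $\AI$-functor, and the only thing left to pin down is that its image consists of ungraded matrix factorizations in the sense of Definition 2.3.

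For this, let $(L,b)$ be an object of $\Fuk_\lambda(M)$ with $L$ transverse to $\bL$; since $(L,b)\in\Fuk_\lambda(M)$ we have $\mathfrak{P}(b)=\lambda$. Then $E:=CF\big((\bL,b(x)),(L,b)\big)$ is the free module over $A=\K[z_1^{\pm1},\dots,z_n^{\pm1}]$ generated by the finite set $\bL\cap L$, and $Q:=\m_1^{b(x),b}$ is an $A$-module endomorphism of $E$; its matrix entries are Laurent polynomials in $x$ because the disk counts twisted by $b(x)$, like $W$ itself, are assumed to be Laurent polynomial. Feeding the curved $\AI$-relation a string consisting of the copies of $b(x)$, one element of $E$, and the copies of $b$, and using that $\one_\bL$ and $\one_L$ act as the left and right units (while all higher $\m_k$ with a unit input vanish), one obtains
\[
Q\circ Q=\mathfrak{P}(b(x))\,\Id_E+\lambda\,\Id_E=\big(W(x)+\lambda\big)\Id_E=\big(W(x)-\lambda\big)\Id_E,
\]
the last equality holding in characteristic two. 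Thus $(E,Q)$ is an ungraded matrix factorization of $W-\lambda$, and the same bookkeeping shows that $\mathcal{F}^\bL_1$ intertwines the Floer differential with the differential $d_{Q,R}$ on the $Hom$-spaces of $\MF^{un}(W-\lambda)$.

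Finally, I would emphasize why the ungraded enhancement is forced here: when $L$ is orientable (and equipped with the extra data orienting the relevant moduli spaces) the intersection set $\bL\cap L$ inherits a canonical $\Z/2$-grading for which $Q$ is odd, so $\mathcal{F}^\bL(L,b)$ is an honest object of $\MF(W-\lambda)$; but when $L$ is non-orientable no such grading is available, $Q$ need not be block-off-diagonal, and $(E,Q)$ is genuinely only an ungraded matrix factorization. This is precisely the assertion of the proposition. The one genuinely technical point in the argument is the Laurent-polynomial dependence of the structure maps $\mathcal{F}^\bL_k$ on the deformation parameter $x$; this is exactly why we assume $W$ is a Laurent polynomial and remain within the framework of \cite{CHL-toric}, and once it is granted everything else is the characteristic-zero construction with the signs deleted.
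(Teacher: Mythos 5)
Your proof follows essentially the same approach the paper takes, which is simply to cite the localized-mirror-functor construction of \cite{CHL17,CHL-toric}, observe that in the spherically positive/characteristic-two setting one can delete all sign and orientation data so the $A_\infty$-functor identities persist, and note that the curved $A_\infty$ relation forces $\big(\m_1^{b(x),b}\big)^2=(W(x)-\lambda)\Id$ while the $\Z/2$-grading on $CF(\bL,L)$ is simply unavailable when $L$ is non-orientable, landing one in $\MF^{un}$. You have filled in more of the bookkeeping (the unit insertions in the curved relation, the Laurent-polynomial dependence on the deformation parameter) than the paper does, but there is no difference in strategy.
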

%\begin{proof}
%Localized mirror functor takes a Lagrangian $L$ to the deformed complex 
%$$\big( \hom((\mathbb{L},b), L), m_1^{b,0} \big).$$
%Maurer-Cartan theory works even if we use the field $\mathbb{K}$ instead of $\C$, and we still have
%$$\big(m_1^{b,0} \big)^2 = W - \lambda_L$$
%The only difference is that  $\hom((\mathbb{L},b), L)$ that is generated by (transverse) intersection points of $\mathbb{L}$ and $L$, is no longer $\Z/2$-graded.
%By setting $Q = m_1^{b,0}$, we have $Q^2 = W$. 
%\end{proof}

 We conclude this section by giving a construction on the Fukaya category which is an analogue of the doubling functor $\mathfrak{D}$ introduced in the previous section.
 Consider a non-orientable monotone Lagrangian $L$ in $M$, equipped with the trivial bounding cochain. Let  $\WT{L}$ be the orientation double cover of $L$. The covering map $\pi : \WT{L} \to L \subset M$ defines a Lagrangian immersion,
 and $\WT{L}$ may be regarded as an immersed Lagrangian in $M$. One can consider $\WT{L}$ as a Lagrangian $L$ equipped with a $\Z/2$-local system (the push-forward of the constant $\Z/2$-local system in $\WT{L}$).

\begin{prop}\label{prop:d}
For a monotone non-orientable Lagrangian $L$, let us denote by $\WT{L}$ its orientation double cover. 
%The covering map $\pi : \WT{L} \to L \subset M$ defines a Lagrangian immersion, and $\WT{L}$ may be regarded as an immersed Lagrangian in $M$. Equivalently, one can think of $\WT{L}$ as a Lagrangian $L$ equipped with a $\Z/2$-local system.
If  $L$ is homological mirror to an ungraded matrix factorization $Q$ under the localized mirror functor $\mathcal{F}^\bL$, then $\WT{L}$ is homological mirror 
to its doubling $\mathfrak{D}(Q)$.
\end{prop}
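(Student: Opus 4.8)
The plan is to compute $\mathcal{F}^\bL(\WT L)$ essentially by hand and to recognise the answer, after a choice of basis, as the doubling $\mathfrak D(Q)$. First I would set up the object: following the discussion above, regard $\WT L$ as the embedded Lagrangian $L$ carrying the rank-two $\Z/2$-local system $\mathcal{E}=\pi_*\underline{\K}$, where $\pi\colon\WT L\to L$ is the covering projection. Two preliminary facts are routine and I would only record them: the localized mirror functor extends to Lagrangians with local systems, and $(L,\mathcal{E})$ has the same disk potential as $(L,0)$ — in our examples $L$ carries no Maslov-index-$2$ disks whose holonomy could alter it (e.g. $\RP^n\subset\CP^n$ has minimal Maslov number $n+1$) — so $(L,\mathcal{E})$ is an object of the same $\Fuk_\lambda(M)$. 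Then $\mathcal{F}^\bL(\WT L)=CF\big((\bL,b(x)),(L,\mathcal{E});\m_1^{b(x),0}\big)$, whose underlying free $A$-module has one generator for each point of $\pi^{-1}(\bL\cap L)=\bL\cap\WT L$; since $\pi$ is a double cover there are exactly two points $p^{+},p^{-}$ over each $p\in\bL\cap L$, so as an $A$-module $\mathcal{F}^\bL(\WT L)\cong E\oplus E$, where $E$ underlies $\mathcal{F}^\bL(L,0)=Q$ and the two summands are spanned by the $+$- and $-$-lifts.

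The key step is to exploit the $\Z/2$-grading. The cover $\WT L$ is orientable by construction, and $\bL$ is orientable in our examples, so $CF\big((\bL,b(x)),\WT L\big)$ is $\Z/2$-graded with $\m_1^{b(x),0}$ odd; that is, $\mathcal{F}^\bL(\WT L)$ is an honest $\Z/2$-graded matrix factorization of $W-\lambda$. I would then observe that the two lifts $p^{+}$ and $p^{-}$ of a point $p\in\bL\cap L$ differ precisely by the orientation of $T_pL$, hence lie in opposite $\Z/2$-degrees. Labelling generators so that every $+$-lift is even and every $-$-lift is odd identifies $E^{\mathrm{even}}=E=E^{\mathrm{odd}}$, and oddness of $\m_1^{b(x),0}$ forces its matrix into the shape $\left(\begin{array}{cc}0&R\\ R'&0\end{array}\right)$, i.e. the two diagonal blocks vanish.

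To finish I would identify $R=R'=Q$ by a direct correspondence of configurations. A rigid configuration contributing to the matrix coefficient $Q_{p_1p_0}$ of $\mathcal{F}^\bL(L,0)$ is a holomorphic strip, with $b(x)$-insertions on the $\bL$-boundary, whose $L$-boundary arc $\gamma_u$ runs from $p_0$ to $p_1$; fixing the lift $p_0^{+}$, the arc $\gamma_u$ lifts uniquely to $\WT L$ and ends at $p_1^{+}$ or $p_1^{-}$. Lifting along $\pi$ leaves the underlying map to $M$, hence the Fredholm index and the rigidity, unchanged, so this is a weight-preserving bijection between the configurations counted by $Q_{p_1p_0}$ and the disjoint union of those counted by $\langle\m_1^{b(x),0}p_0^{+},p_1^{+}\rangle$ and $\langle\m_1^{b(x),0}p_0^{+},p_1^{-}\rangle$. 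The first coefficient is a diagonal block and vanishes, so $\langle\m_1^{b(x),0}p_0^{+},p_1^{-}\rangle=Q_{p_1p_0}$; applying the deck transformation, which swaps $p^{+}\leftrightarrow p^{-}$ and commutes with path lifting, gives $\langle\m_1^{b(x),0}p_0^{-},p_1^{+}\rangle=Q_{p_1p_0}$ as well. Hence both off-diagonal blocks are $Q$, so $\mathcal{F}^\bL(\WT L)=\mathfrak D(Q)$ up to the basis identification, and in particular they are isomorphic in the homotopy category $H^0(\mathcal{MF}(W-\lambda))$, which is the asserted homological mirror statement.

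I expect the computation itself to be short; the one genuine subtlety is the grading bookkeeping in the second step — that orientability of $\WT L$ places the two lifts of each intersection point into opposite $\Z/2$-degrees is exactly what pins the block structure of $\mathcal{F}^\bL(\WT L)$ down to $\mathfrak D(Q)$ rather than to some other self-extension of $Q$. A more formal alternative would be to note that $\mathfrak D(Q)$ is the mapping cone of a closed self-morphism of $Q$ and that $(L,\mathcal{E})$ is the corresponding cone coming from the non-split extension $0\to\underline{\K}\to\pi_*\underline{\K}\to\underline{\K}\to 0$ of local systems, which $\mathcal{F}^\bL$ carries to a cone by $A_\infty$-functoriality; but this route still requires identifying the resulting class in $H(\mathrm{Hom}(Q,Q))$, so the hands-on argument above is preferable.
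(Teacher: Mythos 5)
Your proposal is correct and follows essentially the same approach as the paper: the $\Z/2$-grading on generators of $CF(\bL,\WT L)$ coming from comparing orientations, the observation that the differential $\m_1^{b(x),0}$ has odd degree and hence is block off-diagonal, and the strip-lifting bijection identifying both off-diagonal blocks with $Q$. Your write-up is somewhat more detailed (in particular in spelling out the path-lifting argument and the potential-value bookkeeping), but the core mechanism matches the paper's proof.
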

\begin{proof}
A generator of $CF(\bL,\WT{L})$ for the double cover is given by a generator $p \in CF(\bL, L)$ together with the choice of its lifting $\WT{p}$  for the orientation
double covering $\WT{L} \to L$.
Denote by $F$ the $A$-module $CF(\bL, L)\otimes A$ for $Q$, 
we have $E =E_0 \oplus E_1:= F \oplus F$ is the $A$-module for $\mathfrak{D}(Q)$.
Now, observe that for each pair $(p,\WT{p})$, we have a canonical $\Z/2$-grading by comparing  the orientation of $T_{\WT{p}}\WT{L} \oplus T_p \bL$ and $T_pM$ (even if they agree, odd otherwise).
It is left to argue that the differential $\m_1^{b(x),b}$ switches $\Z/2$-grading.
Recall that the Floer differential counts Maslov-Viterbo index one strips, and the differential has odd degree. This remains true even when we add insertions of bounding cochains of $\bL$.  Thus, the counting of rigid strips (which contributed a map of $F \to F$) now defines the same counting but  contributing to the following two  $A$-module maps $E_0 \to E_1, E_1 \to E_0$. This proves the proposition.
\end{proof}

\begin{remark}
	This construction should be promoted to a functor
	\[\mathfrak{D}:\Fuk_\lambda(M)\to \Fuk_\lambda^{or}(M),\]
where  $\Fuk_\lambda^{or}(M)$ is the Fukaya category consisting of orientable Lagrangians or non-orientable Lagrangians equipped with orientations local systems as above. We then expect the localized mirror functor to intertwine the two doubling functors. 
\end{remark}

\section{First example: the projective plane}

As our main example, we will describe the mirror of real projective spaces. We consider $\CP^n$, equipped with a monotone symplectic form, and for our reference Lagrangian $\bL$ we take the Clifford torus. We consider a family of bounding cochains on $\bL$, which can be identified with a $\K$ local system on the torus. Such local system, is of course determined by its holonomy and therefore by a tuple $(x_1,\ldots, x_n) \in (\K^*)^n$. As computed in \cite{CO}, the disk potential is given by $W(x_1, \ldots, x_n)= x_1 +\ldots+ x_n+ \frac{1}{x_1\cdots x_n}$.

We will now apply the localized mirror functor formalism to this case. In particular, we are interested in the Lagrangian submanifold $\mathbb{R}P^n \subset \CP^n$. For this Lagrangian one can take the trivial bounding cochain $b=0$, which has potential value $\mathfrak{P}(0)=0$. Moreover, for this bounding cochain, $HF(\mathbb{R}P^n, b=0)$  the self Floer cohomology (in other words, the cohomology of the endomorphism algebra of this object in the Fukaya category) is isomorphic to the singular cohomology $H^*(\mathbb{R}P^n, \K)$ as vector spaces, see \cite{Oh2} for example. Therefore $(\mathbb{R}P^n, b=0)$ is a non-trivial object in $\Fuk_0(\CP^n)$. We want to describe the image of this Lagrangian under the mirror functor
$$\mathcal{F}^\bL: \Fuk_0(\CP^n) \to \MF^{un}(x_1 +\ldots+ x_n+ \frac{1}{x_1\cdots x_n}).$$
In the case of $n$ odd, when $\mathbb{R}P^{n}$ is orientable, this was done in \cite{CHL-toric}, based on the Floer complex computations of the first author and Alston \cite{AA}. Here, using the same computations, we consider the even case, which leads to ungraded matrix factorizations. For simplicity we  state the $n=2$ case.

\begin{theorem}\label{thm:mirror_rp2}
	The mirror to $\mathbb{R}P^2$, that is $\mathcal{F}^\bL(\mathbb{R}P^2,b=0)$ is the following ungraded matrix factorization of $W = x + y + \frac{1}{xy}$:
	\begin{equation}\label{eq:1}
		Q_{\mathbb{R}P^2} = \left(\begin{array}{cccc}
			0    &1 &  1 & x^{-1}y^{-1}    \\
			y    &   0  & x^{-1} & 1    \\
			x    &   y^{-1}  & 0 & 1    \\
			1    &   x  & y & 0    \end{array}\right) 
	\end{equation}
	%which satisfies $Q_{\mathbb{R}P^2}^2 = W \cdot \mathrm{Id}$.
\end{theorem}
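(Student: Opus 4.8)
The plan is to compute $\mathcal{F}^\bL(\mathbb{R}P^2, b=0)$ directly from the definition of the localized mirror functor, that is, as the deformed Floer complex $CF\big((\bL, b(x)),(\mathbb{R}P^2, 0);\m_1^{b(x),0}\big)$, where $\bL$ is the Clifford torus carrying the rank-one $\K$-local system with holonomies $(x,y)\in(\K^*)^2$ and disk potential $W=x+y+\tfrac{1}{xy}$ computed in \cite{CO}. Since the trivial bounding cochain on $\mathbb{R}P^2$ has potential value $0$, the output is an object of $\mathcal{MF}^{un}(W-0)=\mathcal{MF}^{un}(W)$; and because $\mathbb{R}P^2$ is non-orientable the Floer complex carries no $\Z/2$-grading, so the resulting $Q_{\mathbb{R}P^2}$ will not be of the block form \eqref{eq:q0q1}.

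First I would pin down the underlying free $A$-module. After a Hamiltonian perturbation making $\bL$ transverse to $\mathbb{R}P^2$, the intersection consists of the four real points $[1:\pm1:\pm1]$ (in homogeneous coordinates normalised so the $|z_i|$ are equal), so $CF(\bL,\mathbb{R}P^2)\otimes A\cong A^{\oplus 4}$, which already matches the size of the matrix in the statement. Next, the $(i,j)$ entry of $\m_1^{b(x),0}$ is the count, over $\K$ and hence without signs since $\operatorname{char}\K=2$, of rigid Maslov--Viterbo index one holomorphic strips from the $j$-th to the $i$-th generator with boundary on $\bL$ and on $\mathbb{R}P^2$, each strip weighted by the holonomy of its $\bL$-boundary arc, which is a Laurent monomial in $x,y$; since $b(x)$ is a flat line bundle, passing from $\m_1$ to $\m_1^{b(x),0}$ is exactly this reweighting and involves no genuine infinite sum. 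These strips and their boundary classes are precisely the data computed by Alston and the first author in \cite{AA} and used for the odd-dimensional (orientable) case in \cite{CHL-toric}; those strip counts are dimension-independent in nature, so for even $n$ the only change is that the resulting matrix cannot be arranged in block off-diagonal form. I would quote that computation, fix a labelling $p_1,\dots,p_4$ of the generators compatible with the statement, and read off the entries, obtaining $Q_{\mathbb{R}P^2}$.

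As a consistency check — and as guaranteed by the $A_\infty$-relations satisfied by the localized mirror functor, which force $(\m_1^{b(x),0})^2=W\cdot\mathrm{Id}$ — I would also verify by hand that $Q_{\mathbb{R}P^2}^2=W\cdot\mathrm{Id}_4$, a short matrix multiplication. A further sanity check is supplied by Proposition~\ref{prop:d}: the orientation double cover of $\mathbb{R}P^2$ is the immersed sphere $S^2\to\mathbb{R}P^2\subset\CP^2$, whose associated Floer complex has generators given by pairs consisting of an intersection point and a choice of lift, hence eight in number, matching the doubling $\mathfrak{D}(Q_{\mathbb{R}P^2})$.

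The main obstacle is the middle step: extracting from \cite{AA} the exact set of index one strips and, above all, the correct holonomy monomial attached to each, since an error in tracking the $\bL$-boundary homology classes would corrupt the off-diagonal entries. In characteristic two the usual sign subtleties disappear, so the remaining difficulty is genuinely geometric and combinatorial — enumerating the strips and their boundary classes — rather than about orientations; the final step of choosing a basis of $A^{\oplus 4}$ realising precisely the matrix $Q_{\mathbb{R}P^2}$ displayed in the statement is then routine.
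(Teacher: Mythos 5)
Your proposal is correct and follows essentially the same route as the paper: identify the four intersection points $[1:\pm1:\pm1]$ of the Clifford torus $\bL$ with $\mathbb{R}P^2$, count rigid index-one strips weighted by the holonomy of their $\bL$-boundary, and cite \cite{AA} for the enumeration. The one thing you leave implicit that the paper makes explicit — and which is the real content behind "quote the computation from \cite{AA}" — is the mechanism by which those strips are classified: $\bL$ is invariant under the anti-holomorphic involution fixing $\mathbb{R}P^2$, so each rigid strip doubles to a Maslov-index-two holomorphic disk with boundary on $\bL$, and those are exactly the standard disks $[z:1:1]$, $[1:z:1]$, $[1:1:z]$ classified in \cite{C}; the paper then reads off the weight of each half-disk by intersecting its $\bL$-boundary arc with the hyper-tori $H_1, H_2$ encoding the local system. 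Your consistency checks ($Q^2 = W\cdot\Id$ and the doubling giving eight generators for the immersed $S^2$) are good, but keep in mind they validate the answer rather than substitute for the strip enumeration.
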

\begin{proof}
	\begin{figure}[h]
		\includegraphics[scale=0.4]{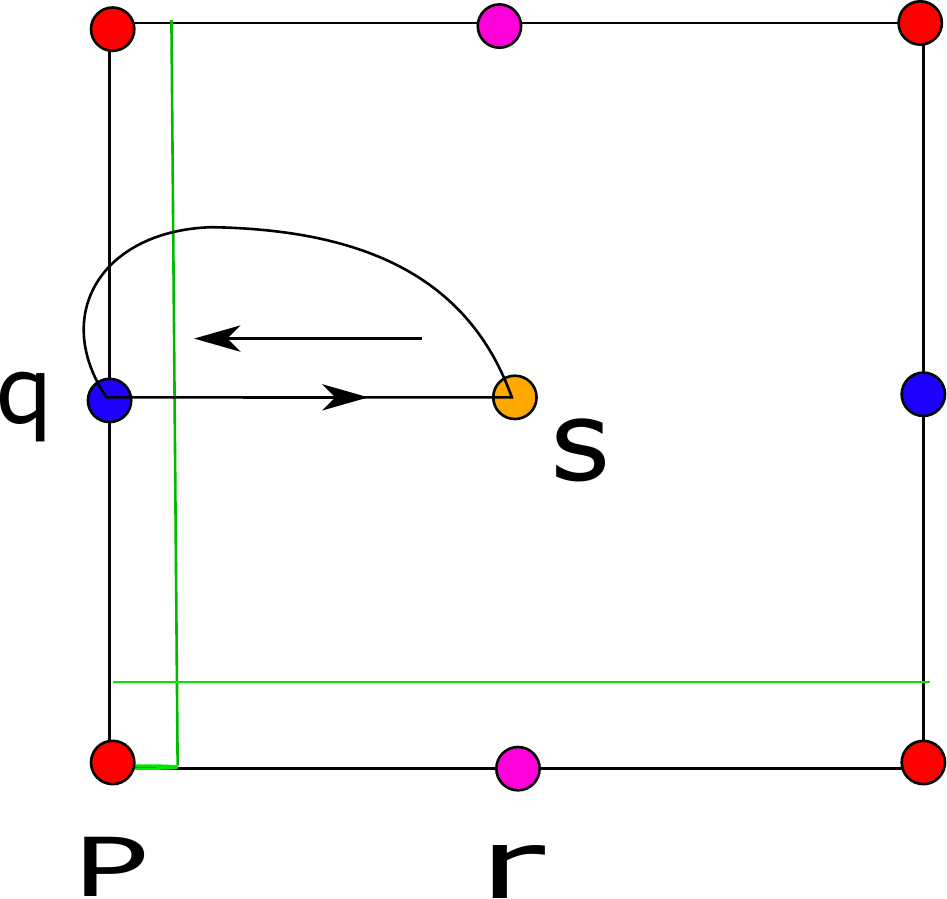}
		\centering
		\caption{Localized mirror functor using hyper-tori }
		\label{T2}
	\end{figure}
	
	Let $\bL$ be the Clifford torus in $\mathbb{C}P^2$, given by 
	$$\bL = \{[z_0,z_1,z_2] \mid |z_0|=|z_1|=|z_2|\} = \{[1,e^{i\theta_1},e^{i\theta_2}] \mid 0 \leq \theta_i < 2\pi \}$$
	Then, the intersection $\mathbb{R}P^2 \cap \mathbb{C}P^2$ is given by 4 points:
	$$ p =[1,1,1], q = [1,1,-1], r = [1,-1,1], s = [1,-1,-1]$$
	The rectangle in Figure \ref{T2} with opposite edges identified, illustrates the torus $\bL$ with these intersection points.
	The matrix factorization is given by $\m_1^{b(x),0}$ in $CF^*(\bL, L)$. We will write it as a matrix, using the basis $(s,r,q,p)$.
	Here the effect of the bounding cochain $b(x)$ is given by the intersection with hyper-tori, which represent the corresponding local system. In this case, two
	hyper-tori  $H_1=[1,e^{i\epsilon},U(1)]$,  $H_2=[1,U(1),e^{i\epsilon}]$, are also illustrated in the Figure as green lines.
	Counting a decorated strip means that we count rigid $J$-holomorphic strips for the Floer complex as usual, but
	in addition, we look at the intersection of the  boundary path on $\bL$ of such a strip, and take the signed intersection number
	with hyper-tori $H_1$ and $H_2$, say $i_1, i_2$. Then we give a weight to the counting of the strip by $x^{i_1}y^{i_2}$.

	The rigid holomorphic strips between $\bL$ and $L$ where described in \cite{AA}: since $\bL$ is invariant under the standard anti-holomorphic involution(which fixes $\RP^2$), any such strip can be	doubled to a holomorphic disc with boundary on $\bL$ of Maslov index two. Such holomorphic discs are already classified by \cite{C}, which are nothing but
	the standard ones $[z,1,1], [1,z,1], [1,1,z]$. For example, halves of the disc $[1,z,1]$ defines strips between $p=[1,1,1]$ and $r=[1,-1,1]$ and  also between $q=[1,1,-1]$ and $s=[1,-1,-1]$.
	
	In Figure \ref{T2}, we illustrated this strip from $s$ to $q$, where  the boundary given by the straight line lies in $\bL$, and the curved boundary lies in $\mathbb{R}P^2$. Note that boundary of this strip intersects the hyper-tori $H_1$ (with the orientation as in the Figure). Thus it should be counted as $x$.
	As we are using $(s, r,q,p)$ as an ordered basis, this counting gives the entry $x$ in $(3,1)$-entry of $Q$.
	
	On the other hand, the same disc $[1,z,1]$ contributes a strip from $s$ to $q$ as well, but as its boundary on $\bL$ does not
	intersect $H_2$ nor $H_1$. This gives $1$ in the $(1,3)$-entry of $Q$.
	Applying the same procedure to all intersections and discs, we obtain the entire matrix $Q$.
\end{proof}

\begin{remark}
	The mirror of $\mathbb{R}P^n \subset \CP^n$ for $n$ even can be computed analogously and we leave it as an exercise. In fact, the description of the Floer complex in \cite{AA} can be used to compute the mirror for the real Lagrangian in any monotone toric manifold.
\end{remark}

Now we compute the endomorphism algebra of the matrix factorization $Q_{\mathbb{R}P^2}$.

\begin{theorem}\label{thm:mf_rp2}
	The map $\mathcal{C}:Jac(W) \to Hom_{H(\mathcal{MF}^{un}(W))}(Q_{\mathbb{R}P^2}, Q_{\mathbb{R}P^2}) $ sending $\alpha \to \alpha \cdot \Id$
	is an isomorphism. In particular, we have
	\[Hom_{H(\mathcal{MF}^{un}(W))}(Q_{\mathbb{R}P^2}, Q_{\mathbb{R}P^2}) \cong  \frac{\K[x]}{x^3-1}.\] 
\end{theorem}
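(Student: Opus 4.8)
The plan is to compute the endomorphism ring on the right explicitly and compare it with $Jac(W)$. One first identifies the Jacobian ring: from $\partial_x W = 1 + x^{-2}y^{-1}$ and $\partial_y W = 1 + x^{-1}y^{-2}$ the equations $\partial_x W = \partial_y W = 0$ force $x = y$ and $x^3 = 1$ and in fact generate the same ideal, so $Jac(W) \cong \K[x]/(x^3 - 1)$ — a $3$-dimensional $\K$-algebra, reduced because $x^3-1$ is separable in characteristic two, whose geometric points are the three nondegenerate critical points $(\zeta,\zeta)$, $\zeta^3 = 1$, of $W$. That $\mathcal{C}$ is a well-defined $\K$-algebra homomorphism is then immediate: $d_{Q,Q}(\alpha\cdot\Id) = 2\alpha Q = 0$ in characteristic two, so $\alpha\cdot\Id$ is always closed, and if $\alpha = \sum_i \beta_i\,\partial_{z_i}W$ lies in the Jacobian ideal, then $\alpha\cdot\Id = d_{Q,Q}\big(\sum_i \beta_i\,\partial_{z_i}Q\big)$ by \eqref{eq:Qdiff}. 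It remains to prove that $\mathcal{C}$ is bijective.

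Write $H = Hom_{H(\mathcal{MF}^{un}(W))}(Q_{\RP^2}, Q_{\RP^2})$. As $Hom$ is a finitely generated free $A$-module, $d_{Q,Q}$ is $A$-linear, and $H$ (being a $Jac(W)$-module) is supported on the finite set $Crit(W)$, the space $H$ is finite-dimensional over $\K$ and splits as $H = \bigoplus_{\mathfrak p} H_{\mathfrak p}$ over the maximal ideals $\mathfrak p$ of $Jac(W)$, compatibly with $\mathcal{C}$. Moreover $H$, $Jac(W)$ and $\mathcal{C}$ all commute with the faithfully flat base change $\K \to \overline{\K}$, so we may assume $\K$ algebraically closed and work one critical point $p$ at a time over the completed local ring $\widehat{A}_p \cong \K[[u,v]]$. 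Putting $\lambda_p = W(p)$ and letting $\mu_p$ be its unique square root, $\widehat{Q}_p := Q_{\RP^2} + \mu_p\cdot\Id$ is an ungraded matrix factorization of $W - \lambda_p$ over $\widehat{A}_p$ with $d_{\widehat{Q}_p,\widehat{Q}_p} = d_{Q,Q}$ (the shift by $\mu_p\cdot\Id$ cancels in characteristic two), so $H_{\mathfrak p}$ is the cohomology of the endomorphism complex of $\widehat{Q}_p$. A direct evaluation at the three critical points shows that the nilpotent matrix $\widehat{Q}_p \bmod \m$ has rank $1$ (for instance at $p = (1,1)$ it is the all-ones matrix). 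Hence, by the standard reduction of a matrix factorization over a complete local ring to a minimal model — which carries over verbatim to the ungraded, characteristic-two setting — $\widehat{Q}_p$ is homotopy equivalent over $\widehat{A}_p$ to the direct sum of a contractible rank-$2$ factorization and a minimal rank-$2$ factorization $Q_p$, and $H_{\mathfrak p} \cong Hom_{H(\mathcal{MF}^{un}(W - \lambda_p))}(Q_p, Q_p)$.

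This last group is then computed directly. Writing $Q_p = \left(\begin{smallmatrix} a & b \\ c & d \end{smallmatrix}\right)$ with entries in the maximal ideal $\m \subset \widehat{A}_p$ and imposing $Q_p^2 = (W - \lambda_p)\cdot\Id$ gives $b(a+d) = c(a+d) = 0$; since $\widehat{A}_p$ is a domain and $W - \lambda_p$ is not a square there — its degree-two part has nonzero $uv$-coefficient, whereas the degree-two part of any square is the square of a linear form and has none — it follows that $d = a$ and $a^2 + bc = W - \lambda_p$, and comparing degree-two parts shows that $b$ and $c$ have nonzero, non-proportional linear parts, so $b,c$ are coprime in $\widehat{A}_p$ and $(b,c) = \m$. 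A short calculation then yields
\[
d_{Q_p,Q_p}\!\left(\begin{array}{cc} g_1 & g_2 \\ g_3 & g_4 \end{array}\right) = \left(\begin{array}{cc} bg_3 + cg_2 & b(g_1 + g_4) \\ c(g_1 + g_4) & bg_3 + cg_2 \end{array}\right),
\]
so that $\ker d_{Q_p,Q_p} = \widehat{A}_p\cdot\Id \oplus \widehat{A}_p\cdot\left(\begin{smallmatrix} 0 & b \\ c & 0 \end{smallmatrix}\right)$ while $\operatorname{im} d_{Q_p,Q_p} = \m\cdot\Id \oplus \widehat{A}_p\cdot\left(\begin{smallmatrix} 0 & b \\ c & 0 \end{smallmatrix}\right)$, and therefore $H_{\mathfrak p} = (\widehat{A}_p/\m)\cdot\Id = \K\cdot\Id$. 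Thus $\mathcal{C}_{\mathfrak p}\colon Jac(W)_{\mathfrak p} = \K \to H_{\mathfrak p} = \K$ sends $1$ to $\Id \neq 0$ and is an isomorphism; summing over the three maximal ideals, $\mathcal{C}$ is an isomorphism, and in particular $H \cong Jac(W) \cong \K[x]/(x^3 - 1)$.

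The main obstacle is the passage to the minimal model and the computation of its endomorphism complex in the second and third paragraphs: one should check that the reduction of matrix factorizations to minimal form over a complete local ring goes through unchanged in the ungraded, characteristic-two situation, and then verify the small amount of commutative algebra pinning down the shape of the minimal rank-$2$ model. By contrast, the rank-one evaluation of $\widehat{Q}_p \bmod \m$ at the three critical points is a finite explicit check and the dimension count in the first paragraph is routine. A more computational alternative, which avoids the localization entirely, would be to compute $\ker d_{Q,Q}/\operatorname{im} d_{Q,Q}$ directly on $\mathrm{Mat}_4(A)$; the local argument above seems cleaner.
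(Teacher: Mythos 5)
Your proof is correct, and it takes a genuinely different route from the paper's. The paper's argument (Section~6) is an explicit global computation: it writes $Q_{\RP^2}$ in $2\times 2$-block form $\left(\begin{smallmatrix}U&V\\xV&U\end{smallmatrix}\right)$ with $U^2=y\,\Id$ and $V=\Id+x^{-1}y^{-1}U$, proves that the auxiliary differential $\delta_U$ is acyclic, uses the scalar invariants $\mathrm{tr}$ and $\mathrm{at}$ to reduce every $\delta_Q$-closed endomorphism to a scalar multiple of $\Id$, and then characterizes the exact such multiples by reading off the Jacobian ideal from an explicit identity. Your argument localizes instead: since the endomorphism cohomology $H$ is a finitely generated module over the Artinian ring $Jac(W)$, it decomposes over the three (nondegenerate) critical points; after shifting by $\mu_p\cdot\Id$ --- a trick that only works in characteristic two, where every critical value has a square root and the shift does not disturb the differential --- you pass to the complete local ring, split off a contractible summand to reach a minimal $2\times 2$ model, and dispatch it with a short commutative-algebra computation that exploits the nonvanishing cross-term in the degree-two part of $W-\lambda_p$. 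Your route is more conceptual and scales better: the paper leaves $\RP^{2n}\subset\CP^{2n}$ as an exercise, and your localization scheme is exactly what one would use there, with only the rank computation of $\widehat Q_p\bmod\mathfrak m$ changing. The paper's computation, by contrast, is elementary and self-contained and needs no structural input. The one step you flag as needing verification --- that minimal-model reduction over a complete local ring carries over verbatim to the ungraded characteristic-two setting --- does in fact hold, and since it is the load-bearing step it is worth spelling out: once $\widehat Q_p\bmod\mathfrak m$ is nilpotent of rank one, after conjugation it has a unit in position $(1,2)$, so $F=\langle Q(e_2),e_2\rangle$ is a free rank-two $Q$-invariant submodule with $Q|_F\cong\left(\begin{smallmatrix}0&1\\W-\lambda_p&0\end{smallmatrix}\right)$ contractible; because $F$ is contractible, the closed element of $\mathrm{Hom}(E/F,F)$ that obstructs turning an arbitrary module splitting of $0\to F\to E\to E/F\to 0$ into a $Q$-equivariant one is automatically exact, so $E$ decomposes as a direct sum of ungraded factorizations, and the rank-two complement has all entries in $\mathfrak m$ by comparing ranks mod $\mathfrak m$.
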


The proof of this theorem is a rather long computation that we postpone to Section 6.

It was proved by Haug in \cite{Hau} that $HF(\mathbb{R}P^2,b=0)$ is isomorphic (as an ungraded algebra) to the quantum cohomology of $\mathbb{C}P^2$, in characteristic two. Explicitly, $HF(\mathbb{R}P^2,b=0)$ is isomorphic to  $\displaystyle \frac{\K[\alpha]}{\alpha^3-1}$, where $\alpha$, under the vector space isomorphism with the singular cohomology $H^*(\mathbb{R}P^2, \K)$, corresponds to a generator of $H^1(\mathbb{R}P^2, \K)$. Therefore, by the above theorem, this is isomorphic to the endomorphism algebra of its mirror. We will now see that the localized mirror functor induces this isomorphism. 

\begin{prop}\label{prop:FLiso}
	The localized mirror functor induces an isomorphism $$\mathcal{F}^\bL :HF(\mathbb{R}P^2,b=0) \to {\mathrm End}_{H(\mathcal{MF}^{un}(W))}(Q_{\mathbb{R}P^2}).$$
	Explicitly, under the isomorphism in Theorem \ref{thm:mirror_rp2}, we have $\mathcal{F}^\bL(\alpha)=x^2$. 
\end{prop}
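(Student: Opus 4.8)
\textbf{Proof plan for Proposition \ref{prop:FLiso}.}

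The plan is to unwind the definition of the localized mirror functor $\mathcal{F}^\bL$ on morphism spaces and track what it does to the generator $\alpha \in HF(\mathbb{R}P^2, b=0) \cong H^1(\mathbb{R}P^2,\K)$. Recall that for any object $(L,b)$ transverse to $\bL$, the functor $\mathcal{F}^\bL$ on hom-complexes is built from the higher $A_\infty$-products $\m_k$ of the Fukaya category: a morphism $c \in CF(L,L)$ is sent to the $A$-module map on $CF((\bL,b(x)),(L,b))$ obtained by inserting $c$ together with arbitrarily many copies of the bounding cochain $b(x)$, i.e. $\mathcal{F}^\bL_1(c) = \sum_{k,l} \m_{k+l+1}(b(x),\dots,b(x), c, b(x),\dots, b(x),-)$, evaluated on an incoming generator in $CF((\bL,b(x)),(L,b))$. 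So the first step is to set up the four intersection points $p,q,r,s$ of $\bL \cap \RP^2$ as the basis (as in the proof of Theorem \ref{thm:mirror_rp2}), recall that $\alpha$ is represented by a specific degree-one cochain in $CF(\RP^2,\RP^2)$ — concretely one of the intersection points of $\RP^2$ with a Hamiltonian pushoff, or in the Morse model a generator of $H^1$ — and identify which holomorphic triangles (strips with one extra input $\alpha$, plus hyper-torus weights from the $b(x)$-insertions) contribute to $\mathcal{F}^\bL_1(\alpha)$.

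The second step is the actual count: enumerate the rigid $J$-holomorphic polygons with boundary on $\bL$ and $\RP^2$ with one input at $\alpha$, one input/output pair among $\{p,q,r,s\}$, and any number of $H_1,H_2$ insertions, read off the monomial weights $x^{i_1} y^{i_2}$ exactly as in the proof of Theorem \ref{thm:mirror_rp2}, and assemble the resulting $4\times 4$ matrix $M := \mathcal{F}^\bL_1(\alpha)$. One then checks that $M$ is a closed morphism in $\mathcal{MF}^{un}(W)$ — this is automatic from the $A_\infty$-relations, since $\mathcal{F}^\bL$ is a functor and $\alpha$ is closed — and that its class in $Hom_{H(\mathcal{MF}^{un}(W))}(Q_{\RP^2},Q_{\RP^2})$ equals $x^2 \cdot \Id$ modulo the image of $d_{Q,Q}$. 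For the last comparison I would invoke Theorem \ref{thm:mf_rp2}: since every cohomology class of endomorphisms is of the form $\beta \cdot \Id$ with $\beta \in Jac(W) = \K[x]/(x^3-1)$, it suffices to compute the "leading coefficient" of $M$ — e.g. evaluate at a point of $Crit(W)$, or extract the scalar by which $M$ acts on the $1$-dimensional generalized eigenspaces — and see that it is $x^2$ rather than $1$ or $x$. Equivalently, one checks that $M - x^2\Id$ is $d_{Q,Q}$-exact by exhibiting an explicit homotopy, which is a short linear-algebra verification over $A$.

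The main obstacle I expect is bookkeeping rather than conceptual: correctly normalizing the representative of $\alpha$ and the lift of the Fukaya-category sign/orientation conventions into characteristic two, and making sure the hyper-torus intersection numbers are computed with the same orientation conventions used in Theorem \ref{thm:mirror_rp2} so that the monomials come out as powers of $x$ and not, say, $x^{-1}$ or mixed in $y$. The fact that the answer is a \emph{pure} power of $x$ (and specifically $x^2$) is forced once one knows from Theorem \ref{thm:mf_rp2} that the endomorphism ring is $\K[x]/(x^3-1)$ with $x$ the image of the generator of $Jac(W)$ coming from the hyperplane class; the geometric input is just identifying which power. A clean way to pin down the exponent without a full polygon count is to use the ring structure: $\alpha$ generates $HF(\RP^2,b=0) \cong \K[\alpha]/(\alpha^3-1)$ and $\mathcal{F}^\bL$ is a ring map on cohomology, so $\mathcal{F}^\bL(\alpha)$ must be a generator of $\K[x]/(x^3-1)$, hence $x$ or $x^2$; a single unambiguous polygon computation (or a degree/weight parity argument coming from the Maslov-index-two discs $[1,z,1]$, $[1,1,z]$ whose halves carry weight $x$ and $y$ respectively) then selects $x^2$. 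I would present the explicit matrix $M$ and the homotopy $h$ with $d_{Q,Q}(h) = M + x^2\Id$ as the final verification.
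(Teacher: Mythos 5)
Your plan matches the paper's proof in all essentials: represent $\alpha$ in a Morse--Bott / chain-level model (the paper takes the explicit curve $\gamma(t)=[0:\cos t:\sin t]$ in $\mathbb{R}P^2$), count rigid strips with one extra boundary marked point on $\gamma$ weighted by hyper-torus intersections to get an explicit $4\times4$ matrix for $\mathcal{F}^\bL(\alpha)$, and then exhibit an explicit matrix $M$ with $[Q,M]=\mathcal{F}^\bL(\alpha)+x^{-1}\Id$, so that $\mathcal{F}^\bL(\alpha)=x^{-1}\Id=x^2\Id$ in cohomology; the isomorphism statement then follows from Theorem~\ref{thm:mf_rp2} together with Haug's computation of $HF(\mathbb{R}P^2,b=0)$.

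One small caution about your proposed shortcut. The fact that $\mathcal{F}^\bL$ is a ring map on cohomology only tells you that $u:=\mathcal{F}^\bL(\alpha)$ satisfies $u^3=1$; it does \emph{not} tell you that $u$ generates $\K[x]/(x^3-1)$, since $u=1$ also satisfies $u^3=1$. Ruling out $u=1$ is exactly the injectivity you are trying to prove, so the ``must be a generator'' step is circular; the paper itself flags that a conceptual proof of injectivity goes through the (currently conjectural, in char 2) compatibility of $\mathcal{F}^\bL$ with open--closed maps. You do correctly fall back on the explicit polygon count and homotopy, so the overall argument is fine once one removes the claim that the exponent is pinned down to $\{x,x^2\}$ a priori. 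Also, a very minor point: in char 2 there are no signs or orientation conventions to normalize, so that part of the ``bookkeeping'' concern is moot; the actual content of the count is identifying that only the two half-disks of $[z:1:1]$ meet $\gamma$, each once, which the paper carries out directly.
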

\begin{proof}
	By definition we have $\mathcal{F}^\bL(\alpha)(p)=\m_2^{b(x),0,0}(p, \alpha)$, for $p \in CF(\bL, \mathbb{R}P^2)$. In order to perform this calculation it is more convenient to use a \emph{Morse-Bott} model for the Floer complexes, either by using a Morse function on $\mathbb{R}P^2$ and counting pearly trajectories \cite{Hau} or using smooth singular chains on $\mathbb{R}P^2$, as in \cite{FOOOZ}. Here we do the later. We represent (the Poincare dual) of $\alpha$ by the closed curve $\gamma(t)=[0:\cos{t}:\sin{t}]$, $t\in [-\pi/2, \pi/2]$ in $\mathbb{R}P^2$. The map $\m_2^{b(x),0,0}( - , \alpha)$ then counts rigid holomorphic strips between $\bL$ and $\mathbb{R}P^2$ with one extra boundary marked point mapping to the image of $\gamma$. From the description of the holomorphic strips in the proof of Theorem \ref{thm:mirror_rp2} it is clear that the only strips that intersect $\gamma$ are the two halves of the disk $[z:1:1]$, and in this case it intersects the curve exactly once. Hence $\m_2^{b(x),0,0}( - , \alpha)$ has the following matrix representation
		\begin{equation}\label{eq:2}
		\mathcal{F}^\bL(\alpha) = \left(\begin{array}{cccc}
			0    &0 &  0 & x^{-1}y^{-1}    \\
			0    &   0  & x^{-1} & 0    \\
			0    &   y^{-1}  & 0 & 0    \\
			1    &   0  & 0 & 0    \end{array}\right) 
	\end{equation}

Now consider \begin{equation*}
M = \left(\begin{array}{cccc}
	0    &0 &  0 & 0    \\
	x^{-1}    &   0  & 0 & 0    \\
	0    &   0  & 0 & x^{-1}y^{-1}    \\
	0   &   0  & 0 & 0    \end{array}\right) 
\end{equation*}
An elementary calculation shows that $[Q, M]= \mathcal{F}^\bL(\alpha) + x^{-1}{\mathrm{Id}}$. Hence, on cohomology we have $\mathcal{F}^\bL(\alpha)= x^{-1}{\mathrm{Id}}= x^{2}{\mathrm{Id}}$, as in the proof of Theorem \ref{thm:mf_rp2}. Now the proposition follows from Theorem \ref{thm:mf_rp2} and the description of $HF(\mathbb{R}P^2,b=0)$ given above. 
\end{proof}

There is a more conceptual explanation of the injectivity part of the above result. At this point it is still conjectural, but we give the outline here since this should be useful in more general settings, for example for $\mathbb{R}P^{2n}$.
In \cite{CLS}, the second author and his collaborators showed that the localized mirror functor is compatible with the \emph{open-closed maps}. This relied on results from \cite{FOOO_MS} which do not directly generalize to positive characteristic. This is the reason why the next couple of paragraphs are at this point conjectural. Nevertheless, we expect the end result to generalize to our setting, and provide us with the following commutative diagram:
\begin{equation}\label{diagram:oc}
	\begin{CD}
	 HF(\mathbb{R}P^{2n},b=0) @>\OC>>  QH^\bullet(\mathbb{C}P^{2n}) \\
	 @V  \mathcal{F}^\bL VV    @V I\circ\mathfrak{ks} VV      \\
	{\mathrm End}_{H(\mathcal{MF}^{un}(W))}(Q_{\mathbb{R}P^{2n}}) @>\OC_{\MF}>>  Jac(W)\frac{dx_1\wedge\ldots \wedge dx_{2n}}{x_{1}\cdots x_{2n}} 
\end{CD} \end{equation}
 Some explanations are in order: the horizontal maps are the open-closed maps in the Fukaya category (see \cite{She_fano}, where the notation $\OC^0$ is used) and in the category of matrix factorizations, also known as the boundary-bulk map. Although the map $\OC_{\MF}$ was defined in \cite{PV} for the category of $\mathbb{Z}/2$- graded factorizations, in characteristic zero, its construction still goes through in our setting. The vertical arrow on the right is the composition of the \emph{Kodaira-Spencer map} $\mathfrak{ks}: QH^\bullet(\mathbb{C}P^{2n}) \to Jac(W)$ (see \cite{FOOO_MS}) with a duality isomorphism $I$ which is an identity in this case (see \cite{CLS}). 

The map $\OC$ is dual to the \emph{closed-open} map $\mathcal{CO}: QH^\bullet(\mathbb{C}P^{2n})  \to HF(\mathbb{R}P^2,b=0)$, which was studied by Tonkonog in this setting \cite{ton18}. He showed that $\mathcal{CO}$ is an isomorphism. Therefore the map $\OC$ is also an isomorphism. It is well know that the Kodaira-Spencer map $\mathfrak{ks}$ is an isomorphism in this case, see for example \cite{FOOOT}. Therefore both compositions in the commutative diagram \ref{diagram:oc} are isomorphisms. This implies, in particular, that $\mathcal{F}^\bL$ is injective. 

\vspace{.1cm}

We now go back to our discussion of mirror symmetry for $\mathbb{R}P^2$. But first we need to recall some notions of $\AI$-category theory \cite{S08}. Given an $\AI$-category $\mathcal{C}$, it embeds, via the Yoneda embedding, into the category $mod(\mathcal{C})$ of right $\AI$ $\mathcal{C}$-modules. One then defines the derived category $D^\pi \mathcal{C}$ to be the smallest subcategory of $H(mod(\mathcal{C}))$ which contains the image of the Yoneda embedding and  is triangulated and split-closed. A full subcategory $\mathcal{B}\subset \mathcal{C}$ is said to split-generate $\mathcal{C}$ if the induced functor $D^\pi \mathcal{B} \to D^\pi \mathcal{C}$ is an equivalence. Please note that all these notions immediately extend to our ungraded, characteristic 2 setting. The main difference is that the shift functor is trivial.

Going back to our example: Tonkonog's result on the closed open for $\mathbb{R}P^2$, combined with Abouzaid's generation criterion \cite{A10} proves that $(\mathbb{R}P^2,b=0)$ (meaning the subcategory with just this object) split-generates the Fukaya category $\Fuk_0(\CP^2)$. This result, together with the general Homological Mirror Symmetry philosophy leads us to make the following conjecture.

\begin{conjecture}\label{conj}
	The object $Q_{\mathbb{R}P^2}$ split-generates the category $\MF^{un}(W=x+y+\frac{1}{xy})$.
\end{conjecture}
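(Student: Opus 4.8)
We outline a possible strategy for proving Conjecture \ref{conj}; the idea is to reduce it to a local model at each critical point of $W$ and classify ungraded matrix factorizations there. After possibly enlarging $\K$ (and descending at the end) we may assume $\K\supseteq\mathbb{F}_4$, so the cube roots of unity lie in $\K$; the critical points of $W=x+y+\frac{1}{xy}$ are then the three points $p_\zeta=(\zeta,\zeta)$ with $\zeta^3=1$, each nondegenerate with critical value $\zeta$, and $Jac(W)\cong\prod_{\zeta^3=1}\K$. The first step is a decomposition of $\MF^{un}(W)$ over the critical points. By the action of $Jac(W)$ on the homotopy category established in Section 2, the ring $Jac(W)\cong\prod_\zeta\K$ acts centrally on every morphism space of $H(\MF^{un}(W))$, so the orthogonal idempotents $e_\zeta\in Jac(W)$ are central idempotents of $H(\MF^{un}(W))$, and on passing to the split-closure they yield an orthogonal decomposition
\[ D^\pi\MF^{un}(W)\;\simeq\;\prod_{\zeta^3=1}D^\pi\MF^{un}(W)_\zeta, \]
where $\MF^{un}(W)_\zeta$ is the full subcategory on which $Jac(W)$ acts through evaluation at $p_\zeta$. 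This is the characteristic-two counterpart of the usual decomposition of a matrix factorization category over the critical values, with the crucial difference that here one decomposes over critical \emph{points} --- which is exactly why one indecomposable ungraded matrix factorization such as $Q_{\mathbb{R}P^2}$ can meet the three distinct critical fibres $W^{-1}(\zeta)$ at once. One then checks, as in the $\Z/2$-graded theory, that restriction to a formal neighbourhood of $p_\zeta$ is an equivalence of $D^\pi\MF^{un}(W)_\zeta$ onto the matrix factorization category of $W$ over $\widehat{\mathcal{O}}_{p_\zeta}$.

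The second step identifies each local category. One chooses formal coordinates at $p_\zeta$ in which $W=\zeta+uv$; this is possible because the Hessian of $W$ at $p_\zeta$ is the nondegenerate hyperbolic form, the pure second derivatives $\partial_x^2W,\partial_y^2W$ vanishing in characteristic two while the mixed derivative is a unit. Ungraded Knörrer periodicity then reduces $\MF^{un}(\zeta+uv)$ to $\MF^{un}(\zeta)$, the category of ungraded matrix factorizations of the nonzero constant $\zeta$ over the residue field. As already emphasised in the introduction, this category is \emph{not} trivial in characteristic two: writing $\zeta=c^2$ (every element is a square in characteristic two), any $Q$ with $Q^2=\zeta\cdot\mathrm{Id}$ gives an involution $c^{-1}Q=\mathrm{Id}+N$ with $N^2=0$, and a short computation shows every size-two Jordan block of $N$ is contractible; hence $H(\MF^{un}(\zeta))\simeq\mathrm{Vect}_\K$, generated by the single object $(\K,c)$. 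Transporting this back through Knörrer periodicity and the localization equivalence, each $D^\pi\MF^{un}(W)_\zeta$ is generated by one object $S_\zeta$ (a rank-two ``skyscraper'' matrix factorization concentrated at $p_\zeta$), so that \emph{any} nonzero object of that piece already split-generates it.

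The conclusion then follows by combining this with Theorem \ref{thm:mf_rp2}: since $\mathrm{End}_{H(\MF^{un}(W))}(Q_{\mathbb{R}P^2})\cong\K[x]/(x^3-1)\cong\prod_\zeta\K$, the component $Q_\zeta:=e_\zeta\cdot Q_{\mathbb{R}P^2}$ is a nonzero object of $D^\pi\MF^{un}(W)_\zeta$ for every $\zeta$, hence split-generates that piece; therefore $Q_{\mathbb{R}P^2}\simeq\bigoplus_\zeta Q_\zeta$ split-generates $\prod_\zeta D^\pi\MF^{un}(W)_\zeta\simeq D^\pi\MF^{un}(W)$. For a field $\K$ not containing $\mathbb{F}_4$ one runs the same argument with the coarser decomposition $\K[x]/(x^3-1)\cong\K\times\K[x]/(x^2+x+1)$, or by Galois descent from $\mathbb{F}_4\otimes_{\mathbb{F}_2}\K$.

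The main obstacle is foundational rather than computational: one must redevelop, in the ungraded characteristic-two setting, the structural results that are standard for $\Z/2$-graded matrix factorizations --- the decomposition of $\MF^{un}(W)$ over $\mathrm{Crit}(W)$ from the $Jac(W)$-action, the equivalence of the local piece at an isolated singularity with matrix factorizations over the completed local ring, and above all ungraded Knörrer periodicity in characteristic two. The last is the crux: the Koszul-type reduction functor should remain an equivalence, but the very phenomena highlighted in the introduction (e.g.\ $\MF^{un}$ of a nonzero constant being nontrivial) mean the usual proofs do not transfer verbatim and must be reexamined. A possible alternative, closer in spirit to the open-closed discussion around diagram \eqref{diagram:oc}, is to bypass Knörrer periodicity and instead prove an Abouzaid--Dyckerhoff-type split-generation criterion for $\MF^{un}(W)$ --- an object split-generates once the boundary--bulk map $\OC_{\MF}$ from its endomorphism algebra to $Jac(W)\frac{dx\wedge dy}{xy}$ is surjective --- and then deduce surjectivity of $\OC_{\MF}$ for $Q_{\mathbb{R}P^2}$ either from the (currently conjectural) diagram \eqref{diagram:oc}, or, since $\OC_{\MF}$ is $Jac(W)$-linear with source and target free of rank one over $Jac(W)$ by Theorem \ref{thm:mf_rp2}, by a direct computation verifying it is nonzero at each of the three critical points.
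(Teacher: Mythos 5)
Note first that the statement you are trying to prove is explicitly labelled a \emph{conjecture} in the paper and is \emph{not} proved there: Theorem 4.6 only asserts that the mirror functor is an equivalence \emph{conditional} on Conjecture~\ref{conj}. So there is no proof in the paper for your proposal to be measured against; what you have written is a strategy for an open problem, and should be read as such.

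With that caveat, your outline is sensible and essentially follows the only natural path, namely the one that works in the $\Z/2$-graded, characteristic-zero theory: decompose $D^\pi\MF^{un}(W)$ over the three nondegenerate critical points using the central $Jac(W)\cong\K[x]/(x^3-1)$-action, pass to the formal local model $\zeta+uv$, apply Kn\"orrer periodicity, and classify ungraded matrix factorizations of a nonzero constant. The elementary ingredients you verify are correct: the critical points are $(\zeta,\zeta)$ with $\zeta^3=1$ and critical value $\zeta$ (which matches the paper's observation that the support of $Q_{\RP^2}$ misses $W^{-1}(0)$); the Hessian at each is hyperbolic because the pure second derivatives of a Laurent monomial vanish mod $2$; the formal Morse lemma for a hyperbolic form does go through inductively in characteristic two; and your classification of $\MF^{un}(\zeta)$ via $N=Q+c\,\Id$ with $N^2=0$ and Jordan blocks is right --- a size-two block $Q=\bigl(\begin{smallmatrix}c&c\\0&c\end{smallmatrix}\bigr)$ is contracted by $h=\bigl(\begin{smallmatrix}0&0\\c^{-1}&0\end{smallmatrix}\bigr)$, leaving $(\K,c)$ as the unique indecomposable up to homotopy. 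Combined with Theorem~\ref{thm:mf_rp2}, which shows $e_\zeta\cdot Q_{\RP^2}\neq 0$ for each $\zeta$, this would indeed close the argument.

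The genuine gaps are precisely the ones you flag: (i) that the $Jac(W)$-idempotents induce an orthogonal decomposition of $D^\pi\MF^{un}(W)$ and that each summand is equivalent to $\MF^{un}$ over the completed local ring (in the graded setting this is due to Dyckerhoff/Orlov and uses finiteness and support arguments that have to be re-examined without the grading); and above all (ii) ungraded Kn\"orrer periodicity over a field of characteristic two. The Koszul functor $(E,Q)\mapsto\bigl(E\otimes\K^2,\,Q\otimes\Id_2+\Id_E\otimes\bigl(\begin{smallmatrix}0&u\\v&0\end{smallmatrix}\bigr)\bigr)$ does square correctly mod~$2$, but the standard proofs that it is fully faithful and essentially surjective lean on the $\Z/2$-grading, and --- as you note yourself --- the target category $\MF^{un}(\zeta+uv)$ is nonempty even though $\zeta\neq 0$, so the usual support-based shortcuts are unavailable. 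Your alternative route via an Abouzaid--Dyckerhoff-type generation criterion keyed to surjectivity of $\OC_{\MF}$ onto $Jac(W)\,\frac{dx\wedge dy}{xy}$ is also reasonable and perhaps cleaner, but that criterion is itself unproven in this setting and moreover hangs on the commutativity of diagram~\eqref{diagram:oc}, which the paper explicitly states is conjectural. In short: no step you propose looks wrong, the low-level computations check out, but several foundational lemmas remain to be supplied --- which is consistent with the paper's choice to leave the statement as a conjecture rather than a theorem.
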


We can now state a version of Homological Mirror Symmetry for our example.

\begin{theorem}
	The localized mirror functor induces an embedding of the derived categories
	\[D^\pi\mathcal{F}^\bL: D^\pi\Fuk_0(\CP^2) \to D^\pi\MF^{un}(W=x+y+\frac{1}{xy}).\]
	If one assumes Conjecture \ref{conj}, this functor is an equivalence.
\end{theorem}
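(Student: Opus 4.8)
The plan is to produce the functor by invoking the general machinery of the localized mirror functor and then upgrading it to derived categories by a standard abstract-nonsense argument, with the only analytic input being Theorem~\ref{thm:mf_rp2}. First I would recall that by Proposition~3.5 the localized mirror functor $\mathcal{F}^\bL\colon \Fuk_0(\CP^2)\to\MF^{un}(W)$ is an honest $\AI$-functor (in the ungraded, characteristic $2$ sense). Any $\AI$-functor $F\colon\mathcal{C}\to\mathcal{D}$ induces a pullback functor on module categories and hence, after passing to the triangulated split-closures, a functor $D^\pi F\colon D^\pi\mathcal{C}\to D^\pi\mathcal{D}$; the point is that $\mathcal{F}^\bL$ takes the Yoneda image of $\mathcal{C}$ into the subcategory generated by the Yoneda image of $\mathcal{D}$, and triangulated/split-closed structures are respected. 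All of the $\AI$-module theory in \cite{S08} carries over verbatim to our setting, as noted in the paragraph preceding the statement, so this step is formal.

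The real content is showing $D^\pi\mathcal{F}^\bL$ is \emph{fully faithful}. Here I would use that $(\mathbb{R}P^2,b=0)$ split-generates $\Fuk_0(\CP^2)$ — this is Tonkonog's closed-open computation combined with Abouzaid's generation criterion, already invoked in the excerpt. Therefore $D^\pi\Fuk_0(\CP^2)$ is equivalent to $D^\pi(\mathcal{A})$, where $\mathcal{A}$ is the full $\AI$-subcategory on the single object $(\mathbb{R}P^2,b=0)$, i.e.\ the (ungraded, char $2$) $\AI$-algebra $CF((\mathbb{R}P^2,0),(\mathbb{R}P^2,0))$. Fully faithfulness of $D^\pi\mathcal{F}^\bL$ then reduces, by a standard argument, to fully faithfulness of the restriction $\mathcal{A}\to\MF^{un}(W)$ on the level of the single object — that is, to checking that $\mathcal{F}^\bL$ induces an isomorphism on cohomology of endomorphism spaces:
\[
HF(\mathbb{R}P^2,b=0)\;\xrightarrow{\ \cong\ }\;{\mathrm End}_{H(\MF^{un}(W))}(Q_{\mathbb{R}P^2}).
\]
But this is exactly Proposition~\ref{prop:FLiso}: the left side is $\K[\alpha]/(\alpha^3-1)$ by Haug \cite{Hau}, the right side is $\K[x]/(x^3-1)$ by Theorem~\ref{thm:mf_rp2}, and $\mathcal{F}^\bL$ sends $\alpha\mapsto x^2$, which is a ring isomorphism. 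Since $\mathcal{F}^\bL$ is an $\AI$-functor, an isomorphism on the endomorphism cohomology of a split-generator of the source implies that $D^\pi\mathcal{F}^\bL$ is fully faithful onto the split-closed triangulated subcategory it generates; hence $D^\pi\mathcal{F}^\bL$ is an embedding.

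For the last sentence, assuming Conjecture~\ref{conj} that $Q_{\mathbb{R}P^2}$ split-generates $\MF^{un}(W)$, the essential image of $D^\pi\mathcal{F}^\bL$ is a split-closed triangulated subcategory containing a split-generator, hence all of $D^\pi\MF^{un}(W)$, so the embedding is an equivalence. The main obstacle is the fully faithfulness step: one must be careful that the reduction ``isomorphism on endomorphisms of a split-generator $\Rightarrow$ fully faithful on $D^\pi$'' genuinely goes through in the ungraded characteristic $2$ framework — this needs the $\AI$-Yoneda lemma and the fact that $D^\pi$ of the source is generated by the single object, both of which hold here since (as remarked in the excerpt) the module theory extends with the only change being that the shift functor is trivial. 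No Novikov-ring or orientation subtleties enter because we are in the monotone case over a field.
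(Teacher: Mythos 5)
Your proposal is correct and follows essentially the same argument as the paper: Tonkonog's split-generation of $\Fuk_0(\CP^2)$ by $(\mathbb{R}P^2,b=0)$ reduces fully faithfulness to the endomorphism isomorphism of Proposition~\ref{prop:FLiso}, and Conjecture~\ref{conj} then upgrades the embedding to an equivalence; the paper's proof is just a two-sentence summary of exactly these steps.
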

\begin{proof}
	Tonkonog's result \cite{ton18} that $(\mathbb{R}P^2,b=0)$ split-generates together with Proposition \ref{prop:FLiso} give the embedding in the statement. Conjecture \ref{conj} combined with Theorem \ref{thm:mirror_rp2} show that the image of $D^\pi\mathcal{F}^\bL$ split generates the target category which proves the second part of the statement.
\end{proof}

\section{Second example: Milnor fiber of $A_{2n-1}$ singularity}

Now, let us discuss the case of $A_{2n-1}$-singularity.
Let $M$ be the Milnor fiber of $A_{2n-1}$-singularity, given by $$\{(x,y) \in \C^2 \mid x^{2n} + y^2 =1\}$$
The diagonal symmetry group $G =\Z/2n \oplus \Z/2$ acts on $M$, and its quotient orbifold is denoted as $X=[M/G]$.
$X$ is a symplectic orbifold $\mathbb{P}^1_{2n, 2,\infty}$ which is a sphere with a puncture and two orbifold points (of $\Z/2n$ and $\Z/2$).

Seidel Lagrangian $\bL$, which is an immersed Lagrangian in $\mathbb{P}^1_{2n, 2,\infty}$ can be used to find the mirror.
Take the simplest arc between $\Z/2$ orbifold point and the puncture and denote it by $L$. 
Note that $L$ is in fact a Lagrangian sub-orbifold of $X$. In the uniformizing chart of $\Z/2$-point, $L$ may be taken as a $\Z/2$-invariant line
(See the red line in Figure \ref{A1} that is $\Z/2$-invariant).
But $\Z/2$-action reverses the orientation of the line, hence $L$ is not orientable. 
 The quotient of the Milnor fiber $\mathbb{P}^1_{2n, 2,\infty}$ has a mirror given by $W=x^{2n} + y^2 + xyz:\C^3 \to \C$.
In particular, wrapped Fukaya category of  $\mathbb{P}^1_{2n, 2,\infty}$ is quasi-equivalent to the matrix factorization category
of $W=x^{2n} + y^2 + xyz$ (see \cite{CCJ}).

Let us illustrate this in the case of $A_1$. Then the Milnor fiber $M$ is an annulus (centered at the origin of $\mathbb{R}^2$), and its $\Z/2 \oplus \Z/2$ action is defined as follows.
The diagonal action is nothing but a half-rotation of the annulus (without any fixed points).
Additional $\Z/2$-action can be taken as a simultaneous half-rotation at the two squares in Figure 1,
which gives two additional fixed points on the $x$-axis. The lifts of $\bL$ are given by the union of 4 circles.
$L$ is the intersection of the annulus with the positive $y$-axis. To avoid the triple intersection with $\bL$, we can perturb $L$ either
in a $\Z/2$-invariant way to obtain $L_2$ (which gives the wavy curve) or perturb it in a non-equivariant way to obtain $L_3$ (which gives the red line on the right).
Here we consider a  Floer theory of $L_i$ and $\bL$ in $\mathbb{P}^1_{2n, 2,\infty}$ or a $\Z/2 \oplus \Z/2$-equivariant Floer theory
of their lifts in the annulus.

\begin{figure}[h]
\includegraphics[scale=0.9]{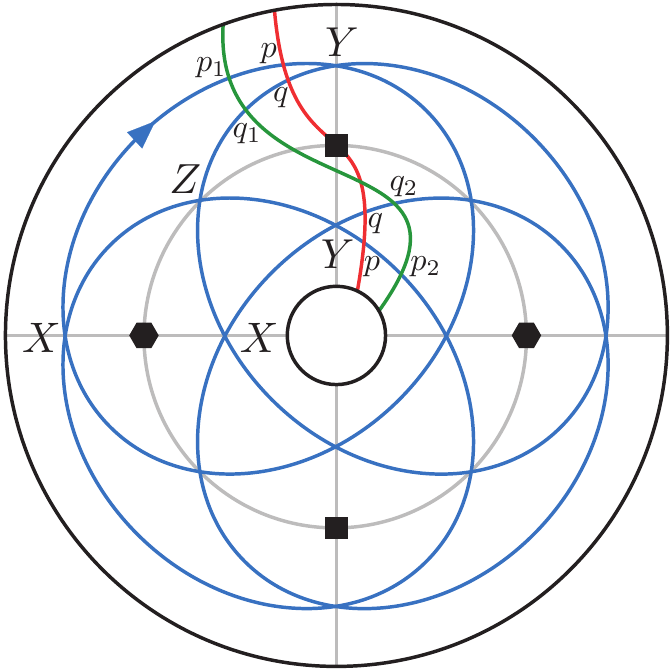}
\centering
\caption{$A_1$ Milnor fiber with $\Z/2$-invariant and $\Z/2$-equivariant Lagrangians}
\label{A1}
\end{figure}

We can apply the localized mirror functor of \cite{CHL17} to $\mathbb{C}P^1_{2n, 2,\infty}$.
\begin{prop}\label{prop:mf_an}
The localized mirror functor from the Fukaya category of the Milnor fiber quotient $X$ to matrix factorizations of $W=x^{2n} + y^2 + xyz$ was defined in \cite{CCJ}.
 Under the functor, non-orientable Lagrangian $L_2$ maps to the ungraded matrix factorization 
 $$Q= \left(\begin{array}{cc}
  x^n  &  y \\
 y +xz & x^n    
   \end{array}\right)$$
For the Berglund-H\"ubsch homological mirror symmetry, we only need to set $z=0$ (which is mirror to the monodromy of $A_{2n-1}$-singularity), to obtain the ungraded matrix factorization of $W_0=x^{2n} + y^2$:
    $$R= \left(\begin{array}{cc}
  x^n  &  y \\
  y &     x^n   
   \end{array}\right)$$
\end{prop}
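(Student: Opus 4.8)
The plan is to unwind the definition of the localized mirror functor on objects. By the construction reviewed in Section 3, $\mathcal{F}^\bL(L_2,0)$ is the deformed Floer complex $CF\big((\bL,b(x)),(L_2,0);\m_1^{b(x),0}\big)$, where $\bL$ is the Seidel-type Lagrangian in $X=\mathbb{P}^1_{2n,2,\infty}$ and $\{b(x)\}$ is the family of weak bounding cochains on $\bL$ whose disk potential is $W=x^{2n}+y^2+xyz$, as set up in \cite{CCJ}. Thus the proof reduces to two concrete tasks: (i) identify the underlying free module, i.e. the set of transverse intersection points $\bL\cap L_2$ — resolving the triple point of $L$ with the two local branches of $\bL$ produces exactly the two points that match the size of $Q$ — and (ii) compute the matrix of $\m_1^{b(x),0}$ in the ordered basis given by these two generators. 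Because $L_2$ is non-orientable, this Floer complex carries no $\Z/2$-grading, so the output is automatically an object of $\MF^{un}(W)$ rather than of $\MF(W)$.

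For task (ii) I would either work $\Z/2n\oplus\Z/2$-equivariantly on the Milnor fiber — where $\bL$ lifts to a union of circles and $L_2$ lifts to an invariant family of arcs, as in the $A_1$ picture of Figure \ref{A1} — or count the immersed rigid holomorphic polygons in the orbifold $X$ directly. Exactly as in the proof of Theorem \ref{thm:mirror_rp2}, each rigid polygon bounded by $\bL$ and $L_2$ is weighted by a monomial in $x,y,z$ recording how often its $\bL$-boundary passes through the immersed point and the two orbifold points of $\bL$; summing these contributions over all rigid polygons from one generator to the other produces the four entries $x^n$, $y$, $y+xz$, $x^n$. The asymmetry between the two off-diagonal entries (the extra $xz$ on one side) reflects that the two generators sit differently with respect to the $\Z/2$-orbifold point from which $L_2$ emanates, so that only one of the two corners admits the small polygon through that orbifold point carrying the $xz$-term.

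It then remains to verify that $Q$ is a matrix factorization and to handle the $z=0$ case. The first is the direct check
\[
\left(\begin{array}{cc} x^n & y \\ y+xz & x^n \end{array}\right)^{2}
= \left(\begin{array}{cc} x^{2n}+y(y+xz) & 2x^n y \\ 2x^n(y+xz) & x^{2n}+y(y+xz) \end{array}\right)
= (x^{2n}+y^2+xyz)\,\Id,
\]
where the off-diagonal terms vanish since $\mathrm{char}\,\K=2$; one also reads off from the same enumeration that the disk potential of $(L_2,0)$ equals $0$, so that $(L_2,0)$ is an object of $\Fuk_0(X)$ and $Q$ factors $W$ (rather than $W-\lambda$ for $\lambda\neq0$). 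For the Berglund--H\"ubsch statement, setting $z=0$ is mirror, on the symplectic side, to removing the contribution corresponding to the $A_{2n-1}$ monodromy term $xyz$; substituting $z=0$ into $Q$ — equivalently, restricting $\mathcal{F}^\bL$ accordingly — yields $R=\left(\begin{array}{cc} x^n & y \\ y & x^n \end{array}\right)$, and $R^2=(x^{2n}+y^2)\,\Id=W_0\cdot\Id$, again using $\mathrm{char}\,\K=2$.

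The main obstacle is task (ii): producing a \emph{complete} list of the rigid holomorphic polygons bounded by $\bL$ and $L_2$ and computing each weight monomial correctly, including the factors coming from the $\Z/2n$- and $\Z/2$-orbifold points. Here one leans on the explicit Floer-theoretic analysis already carried out in \cite{CCJ}; the only genuinely new input is to run that enumeration for the non-equivariantly perturbed — hence non-orientable — representative $L_2$, and to observe that the resulting complex is forced into $\MF^{un}(W)$. The $A_1$ case ($n=1$), where $W=x^2+y^2+xyz$ and the polygon count is shortest, is a useful sanity check to carry out first before treating general $n$.
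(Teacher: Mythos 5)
Your plan is the same as the paper's: reduce to the deformed Floer complex $CF\big((\bL,b(x)),(L_2,0);\m_1^{b(x),0}\big)$, identify the two generators, count decorated rigid polygons to obtain the matrix entries, check $Q^2=W\cdot\Id$ directly, and set $z=0$ for the Berglund--H\"ubsch part. However, the central step --- producing and weighting the complete list of rigid polygons --- is asserted rather than carried out, and you yourself flag it as ``the main obstacle.'' That enumeration is precisely the content of the paper's short proof: in the $A_1$ case the bigon responsible for the $y^2$ term of $W$ is cut by $L_2$ into two strips, contributing $y$ to both off-diagonal entries; the $XYZ$-triangle is cut to give the additional $xz$ in one off-diagonal entry only; and the bigon for $x^2$ (the $x^{2n}$-gon for general $n$) is cut to give $x^n$ on each diagonal entry. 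Until you actually list these polygons and their weights, what you have is an outline of the proof, not the proof.

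Two smaller slips are worth correcting, since if taken literally they would produce wrong weights or the wrong object. First, you say the weight monomial records how often the $\bL$-boundary ``passes through the immersed point and the two orbifold points of $\bL$.'' In the Seidel-Lagrangian formalism all three variables $x$, $y$, $z$ label self-intersection corners of the immersed circle $\bL$; the weight counts corners at those immersed points, not intersections with orbifold points of the ambient orbifold. Second, you describe $L_2$ as the ``non-equivariantly perturbed --- hence non-orientable'' representative. In the paper $L_2$ is the $\Z/2$-\emph{invariant} perturbation of the arc $L$ (the wavy curve in Figure \ref{A1}); it is non-orientable because $L$ passes through the $\Z/2$ orbifold point and an invariant perturbation inherits that feature. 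The non-invariant perturbation is $L_3$, which is the orientation double cover of $L_2$ and maps under the mirror functor to $\mathfrak{D}(Q)$, not to $Q$.
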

\begin{proof}
Let us explain this in the case of $A_1$ and leave the general cases as an exercise.
Let us denote the $\Z/2$-equivariant perturbation of $L$ as $L_2$.
$L_2$ and the lift of the Seidel Lagrangian intersects at 4 points, and modulo $\Z/2$-action,
we label them as $p$ and $q$.

 We count decorated Floer strips between $L_2$ and $\bL$ (which
means, we allow corners of $X,Y,Z$ while counting strips from $p,q$ to $p,q$.
A bigon for the potential $y^2$ is cut by $L_2$ and it give rise to the entry $y$ from $p$ to $q$ and another entry $y$ 
from $q$ to $p$.  $XYZ$-triangle is cut by $L_2$ and it give rise to the additional entry $xz$ from $p$ to $q$.
A bigon for $x^2$ is cut by $L_2$ and it gives rise to the entry $x$ from $p$ to $p$.
(For general $n$,  an $x^{2n}$-gon is cut by $L_2$ to give the entry $x^n$).
This gives the matrix $Q$.
\end{proof}
\begin{remark}
Here $L_3$ may be regarded as an orientation double cover of $L_2$ in the sense that
we take a $\Z/2$-equivariant (not invariant) perturbation of $L$.
In  Figure \ref{A1}, $L_3$ is drawn as a   slight perturbation of $L_2$ and 
 $L_3 \cap \bL$ are labeled by $p_1,q_1, q_2,p_2$ as in the figure.
 Now $L_3 \cap \bL$ can be $\Z/2$-graded and $p_1$ and $q_2$ have the same $\Z/2$-grading, which is
the opposite of the grading of $p_2$ and $q_1$. One can check that the resulting matrix factorization is $\mathfrak{D}(Q)$.
\end{remark}

We conclude this section by describing the endomorphism algebras of the matrix factorizations in the previous proposition. Its proof follows from two straightforward computations that we omit. 

\begin{prop}
 Let $Q$ be the object in $\MF^{un}(W=x^{2n} + y^2 + xyz)$ defined in Proposition \ref{prop:mf_an}, then
 \[{\mathrm End}_{H(\mathcal{MF}^{un}(W))}(Q)\cong \frac{\K[x,z]}{xz}.\]
 
 For the matrix factorization $R$ in $\MF^{un}(W_0=x^{2n} + y^2)$, also defined in  Proposition \ref{prop:mf_an}, we have the following isomorphism
 \[{\mathrm End}_{H(\mathcal{MF}^{un}(W_0))}(R)\cong \frac{\K[x,J]}{J^2-1}.\]
\end{prop}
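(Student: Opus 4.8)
The plan is to compute both endomorphism algebras directly as cohomology of the differential $d_{Q,Q}$ on $2\times 2$ matrices over the relevant polynomial ring, exploiting the symmetry of $Q$ and $R$ to shorten the bookkeeping. For the first statement, I work in $A = \K[x,z]$ (after suitable normalization of variables) and write a general endomorphism of $Q$ as a matrix $f = \left(\begin{smallmatrix} a & b \\ c & d \end{smallmatrix}\right)$ with $a,b,c,d \in A$. The closedness condition $d_{Q,Q}(f) = Qf + fQ = 0$ becomes a system of four polynomial equations; since $\mathrm{char}\,\K = 2$ this reads simply $Qf = fQ$, i.e. $f$ commutes with $Q = \left(\begin{smallmatrix} x^n & y \\ y+xz & x^n \end{smallmatrix}\right)$. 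Solving this linear system over $A$ (treating $y$ as an additional variable subject to $y^2 + xyz = -x^{2n}$, equivalently working in the hypersurface ring) identifies the space of cocycles; then I quotient by the image of $d_{Q,Q}$, which is spanned by the matrices $QM + MQ$ as $M$ ranges over all of $\mathrm{Mat}_2(A)$. The claim is that the resulting quotient is spanned by $\mathrm{Id}$, the class of $x$, and the class of $z$ (or a representative built from the off-diagonal blocks), with the single relation $xz = 0$ coming from the matrix factorization equation $x^{2n} + y^2 + xyz = 0$ together with the cocycle/coboundary identifications; hence $\mathrm{End} \cong \K[x,z]/(xz)$.

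For the second statement the computation is entirely parallel but cleaner, since $R = \left(\begin{smallmatrix} x^n & y \\ y & x^n \end{smallmatrix}\right)$ is symmetric and $R^2 = (x^{2n}+y^2)\mathrm{Id} = W_0 \cdot \mathrm{Id}$. Again I write $f = \left(\begin{smallmatrix} a & b \\ c & d \end{smallmatrix}\right)$ over $B = \K[x,y]/(x^{2n}+y^2)$ (or over the Laurent ring as appropriate) and solve $Rf = fR$. The extra symmetry makes the cocycle space easy to read off: besides $\mathrm{Id}$ one gets a distinguished class, call it $J$, represented by something like $\left(\begin{smallmatrix} 0 & 1 \\ 1 & 0 \end{smallmatrix}\right)$ or a twist thereof (the matrix implementing the ``square root of monodromy''), and one checks $J^2 = \mathrm{Id}$ on cohomology using $R^2 = W_0\mathrm{Id}$ and the coboundary relations, while all other candidate generators (e.g. powers of $x$ beyond $x^0$, or $y$) turn out to be coboundaries or to vanish modulo the relations. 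This yields $\mathrm{End} \cong \K[x,J]/(J^2-1)$; the appearance of $J^2 - 1$ rather than $J^2$ reflects that we are over the Milnor fiber ($W_0 = x^{2n}+y^2$ with the ``$=1$'' fiber condition baked in from the geometry), matching the fact that $L_2$ has trivial bounding cochain with potential value $0$ and $HF(L_2,L_2)$ should see the two-to-one cover structure.

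The main obstacle is not conceptual but organizational: correctly identifying the coboundary subspace. The space of coboundaries $\{QM+MQ : M \in \mathrm{Mat}_2(A)\}$ is four-dimensional over $A$ before imposing the hypersurface relation, and one must carefully determine its image inside the (infinite-dimensional over $\K$, but finitely generated over the Jacobian-type ring) cocycle space; errors here are what would produce a wrong final answer. A clean way to organize this is to use the Jacobian-ring module structure from the Lemma in Section 2: $\mathrm{End}_{H(\mathcal{MF}^{un})}(Q)$ is a $\mathrm{Jac}(W)$-module, and for $A_{2n-1}$ one knows $\mathrm{Jac}(W)$ (resp. $\mathrm{Jac}(W_0)$) explicitly, so the computation reduces to identifying which cyclic module it is — generated by the identity, with annihilator exactly the stated relation. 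I would first pin down $\mathrm{Jac}$, then show $\mathrm{Id}$ generates, then compute the annihilator by exhibiting the nullhomotopies $[Q,M] = x\cdot\mathrm{Id}$-type identities (as was done for $\mathbb{R}P^2$ in Proposition \ref{prop:FLiso}), which directly produces the relations $xz=0$ and $J^2=1$ respectively.
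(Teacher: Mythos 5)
The paper gives no proof of this proposition (``Its proof follows from two straightforward computations that we omit''), so there is no argument to compare against; your primary plan --- compute $\delta_Q$-cocycles modulo coboundaries directly --- is the intended computation and it does yield the stated answers. Two small corrections to the setup: the ambient ring is $\K[x,y,z]$ (resp.\ $\K[x,y]$), not $\K[x,z]$, and one does \emph{not} reduce modulo $W$: morphisms in $\mathcal{MF}^{un}$ are matrices over the polynomial ring, not the hypersurface ring. With that fixed your outline checks out. For $Q$, a cocycle $f=\left(\begin{smallmatrix} a & b \\ c & d\end{smallmatrix}\right)$ must satisfy $d=a$ and $y(b+c)=bxz$, so $b=yb'$, $c=(y+xz)b'$; a coboundary $QM+MQ$ has equal diagonal entries ranging over the ideal $(y,xz)$ and off-diagonal parameter $b'$ ranging over all of $\K[x,y,z]$, so the cohomology is $\K[x,y,z]/(y,xz)\cong\K[x,z]/(xz)$. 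For $R$ the cocycles are $\left(\begin{smallmatrix} a & b \\ b & a\end{smallmatrix}\right)$, the coboundaries are $y$ times the same, and the algebra is generated over $\K[x]$ by $\Id$ and $J=\left(\begin{smallmatrix} 0 & 1 \\ 1 & 0\end{smallmatrix}\right)$, with $J^2=\Id$ holding on the nose (you do not need $R^2=W_0\Id$ for this).

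The ``clean way'' you suggest at the end, however, has a genuine gap in the second case. You propose to ``show $\Id$ generates'' and then compute its annihilator over $\mathrm{Jac}$, but $\Id$ does \emph{not} generate ${\mathrm End}_{H(\mathcal{MF}^{un}(W_0))}(R)$ as a $\mathrm{Jac}(W_0)$-module. In characteristic two $\partial_x W_0 = 2n\,x^{2n-1}=0$ and $\partial_y W_0 = 2y = 0$, so $\mathrm{Jac}(W_0)=\K[x,y]$, and the closed-open map $\mathrm{Jac}(W_0)\to{\mathrm End}(R)$ kills $y$ and has image $\K[x]\cdot\Id$, missing $J$ entirely; the target is a free $\K[x]$-module of rank $2$, not cyclic. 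This is exactly what the paper flags right after the proposition: the closed-open map $\mathcal{C}$ from Theorem~\ref{thm:mf_rp2} fails to be an isomorphism here. Relying on the shortcut would have made you overlook $J$ and report $\K[x]$ instead of $\K[x,J]/(J^2-1)$. Stick with the direct computation, which is your stated primary method and is correct (and, for $Q$, the $\mathrm{Jac}$-module shortcut does work, with annihilator of $\Id$ equal to the image of $(y)$ in $\mathrm{Jac}(W)=\K[x,y,z]/(yz,xz,xy)$).
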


In particular, this proposition shows that the \emph{closed-open map} $\mathcal{C}$ from Theorem \ref{thm:mf_rp2} is not an isomorphism in these cases.

\section{Proof of Theorem \ref{thm:mf_rp2}}

On this section, for simplicity we will write $Q$ for the matrix factorization $Q_{\mathbb{R}P^2}$. We start by defining the matrices $U$ and $V$ as follows:
$$Q = \left(\begin{array}{cccc}
 0    &1 &  1 & x^{-1}y^{-1}    \\
 y    &   0  & x^{-1} & 1    \\
 x    &   y^{-1}  & 0 & 1    \\
  1    &   x  & y & 0    
   \end{array}\right) = \left(\begin{array}{cc} U & V \\ xV & U \end{array}\right)$$

 where $U =  \left(\begin{array}{cc}  0 & 1\\y & 0 \end{array}\right)$ and
 $V =  \left(\begin{array}{cc} 1  & x^{-1}y^{-1} \\x^{-1} & 1 \end{array}\right) = \Id + x^{-1}y^{-1} U$.
 
 Given $f\in {\mathrm End}_{\mathcal{MF}^{un}(W)}(Q)$ we denote  its differential by $\delta_Q(f) = [Q,f] = Qf + fQ$.
 %Note that $Q^2 = (x + y  + x^{-1}y^{-1}) I$ and hence $\delta_Q(f)^2 =0$.
 Similarly, since $U^2 = y\mathrm{Id}$, we will consider the differential $\delta_U$ (on the space of 2 by 2 matrices).
 
 Our first goal is to characterize morphisms commuting with $Q$, or $\delta_Q (f) =0$. We will need a few lemmas.
 
 \begin{lemma}
 	We have the following:
 \begin{enumerate}
 \item $\delta_U$ is acyclic.
 \item $U^2 = y \Id, V^2 = ( 1+ x^{-2}y^{-1}) \Id, \;\; VU = UV = x^{-1} \Id + U$.
 \item  For $A =  \left(\begin{array}{cc}  a & b \\c & d \end{array}\right)$,
$$\delta_U (A) =  \left(\begin{array}{cc}   at(A) & tr(A) \\ y \cdot tr(A) & at(A) \end{array}\right),$$
where $tr(A)  = a+d, at(A) = yb +c$.
\end{enumerate}
\end{lemma}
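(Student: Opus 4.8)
The plan is to prove (3) first by an explicit matrix multiplication, since both (1) and (2) follow from it together with the single identity $U^2 = y\,\Id$. For (3), I would multiply out $\delta_U(A) = UA + AU$ for $A = \left(\begin{smallmatrix} a & b \\ c & d\end{smallmatrix}\right)$, using $U = \left(\begin{smallmatrix} 0 & 1 \\ y & 0\end{smallmatrix}\right)$ and the fact that in characteristic two there is no sign in $\delta_U$. One computes $UA = \left(\begin{smallmatrix} c & d \\ ya & yb\end{smallmatrix}\right)$ and $AU = \left(\begin{smallmatrix} yb & a \\ yd & c\end{smallmatrix}\right)$; adding these gives precisely the matrix in the statement, with diagonal entries $yb + c = at(A)$ and off-diagonal entries $a + d = tr(A)$ and $y(a+d) = y\cdot tr(A)$.

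For (2), the identity $U^2 = y\,\Id$ is immediate from the shape of $U$. The statements about $V$ I would deduce from $V = \Id + x^{-1}y^{-1}U$: then $V^2 = \Id + 2x^{-1}y^{-1}U + x^{-2}y^{-2}U^2 = \Id + x^{-2}y^{-2}\cdot y\,\Id = (1 + x^{-2}y^{-1})\Id$, the cross term dropping out in characteristic two; and $UV = VU = U + x^{-1}y^{-1}U^2 = U + x^{-1}\Id$, again using $U^2 = y\,\Id$.

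For (1), I would read the kernel and image of $\delta_U$ off the formula in (3). Note $\delta_U = d_{U,U}$ in $\mathcal{MF}^{un}(y)$ since $U^2 = y\,\Id$, so $\delta_U^2 = 0$ is automatic (and in any case follows from $U^2 = y\,\Id$); hence acyclicity amounts to showing $\ker\delta_U = \operatorname{im}\delta_U$. From (3), $\delta_U(A) = 0$ precisely when $tr(A) = at(A) = 0$, i.e. $d = a$ and $c = yb$, so $\ker\delta_U = \{a\,\Id + b\,U : a, b \in A\}$, the matrices of the form $\left(\begin{smallmatrix} a & b \\ yb & a\end{smallmatrix}\right)$. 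Conversely, the functionals $A \mapsto tr(A)$ and $A \mapsto at(A)$ are independently surjective onto $A$ — taking $A$ with a single nonzero top-row entry realizes any value of $tr$ with $at = 0$ (giving $\delta_U(A) = tr(A)\,U$), and $A$ with a single nonzero bottom-left entry realizes any value of $at$ with $tr = 0$ (giving $\delta_U(A) = at(A)\,\Id$) — so $\operatorname{im}\delta_U = \{u\,\Id + v\,U : u, v \in A\}$, the same submodule. Thus $\ker\delta_U = \operatorname{im}\delta_U$ and $\delta_U$ is acyclic.

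Every step is a finite explicit computation over the Laurent ring $A$, so there is no serious obstacle; the only point to state with care is the coincidence $\ker\delta_U = \operatorname{im}\delta_U$ in (1), which is exactly what makes $\delta_U$ acyclic and is the feature later exploited in the computation of $\operatorname{End}(Q)$. The characteristic-two hypothesis is used throughout, both to suppress sign factors in $\delta_U$ and to kill the cross term in $V^2$.
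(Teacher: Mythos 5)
Your proposal is correct and takes essentially the same route as the paper: prove (2) and (3) by direct matrix computation, then deduce (1) from formula (3) by observing that $\delta_U(A)=0$ forces $tr(A)=at(A)=0$ and that any such $A=a\,\Id+bU$ lies in the image of $\delta_U$. Your explicit identification of $\ker\delta_U=\operatorname{im}\delta_U$ as the submodule $\{u\,\Id+vU\}$ is a slightly cleaner way to phrase what the paper does by exhibiting a preimage directly.
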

\begin{proof}
Second and third statement can be checked directly by simple computations.
We can use the third statement to prove the first one. 
For a matrix $A$ with $\delta_U(A)=0$, we have $tr(A) = at(A)=0$, and
hence $A$ is of the form $A =  \left(\begin{array}{cc}  a & y^{-1}c \\ c & a \end{array}\right).$
But such an $A$ can be written as $\delta_U   \left(\begin{array}{cc}  0 & 0 \\ x & y^{-1}c \end{array}\right)$.
\end{proof}

%Now, we characterize morphisms commuting with $Q$, or $\delta_Q (f) =0$.
We write $f =  \left(\begin{array}{cc}  A & B \\C & D \end{array}\right)$,
and compute
\begin{equation}\label{eq:deltaQf}
 \delta_Q (f) = \left(\begin{array}{cc}  [U,A] +VC +xBV & [U,B]+VD+AV\\ \;[U,C] + x(VA+DV) & [U,D] + xVB +CV \end{array}\right)
 \end{equation}
 The vanishing of $(1,2)$ component of the above implies that 
  $$VD+AV=[U,B] =  \delta_U(B)$$
  Hence, $\delta_U(VD+AV)=0$, which implies that 
\begin{equation}\label{eq:trati}
tr(VD+AV) = at(VD+AV)=0.
\end{equation}
The following lemma can be checked easily.
\begin{lemma}
For any $(2\times 2)$ matrix $F$, we have
$$tr(VF) = tr(FV) = tr(F) + x^{-1}y^{-1} \cdot at(F)$$
$$at(VF) =tr(FV) = at(F) + x^{-1} \cdot tr(F)$$
\end{lemma}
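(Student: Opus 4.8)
**Proof plan for Lemma (the "last statement" — the two identities $tr(VF)=tr(FV)=tr(F)+x^{-1}y^{-1}\cdot at(F)$ and $at(VF)=at(FV)=at(F)+x^{-1}\cdot tr(F)$).**

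The plan is to reduce everything to the structural identity $V = \Id + x^{-1}y^{-1}U$ established in the preceding lemma, together with part (3) of that lemma, which computes $\delta_U$ and in particular records how $U$ interacts with the functionals $tr$ and $at$ on $2\times 2$ matrices. The key preliminary observation I would extract is that multiplication by $U$ on either side acts on the pair $(tr, at)$ in a controlled way: from $U^2 = y\Id$ and the explicit form $U = \left(\begin{smallmatrix} 0 & 1 \\ y & 0\end{smallmatrix}\right)$, a direct two-line computation gives, for any $2\times 2$ matrix $F$, that $tr(UF) = tr(FU) = at(F)$ and $at(UF) = at(FU) = y\cdot tr(F)$. (Concretely, writing $F = \left(\begin{smallmatrix} a & b \\ c & d\end{smallmatrix}\right)$, one has $UF = \left(\begin{smallmatrix} c & d \\ ya & yb\end{smallmatrix}\right)$, whose trace is $c + yb = at(F)$ and whose "at" is $y\cdot d + ya = y\,tr(F)$; the right-multiplication case is identical.) This is the only genuine computation needed, and it is short.

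Granting that, the proof is pure bookkeeping: since $tr$ and $at$ are linear in their matrix argument, and $V = \Id + x^{-1}y^{-1}U$, I expand
\[
tr(VF) = tr(F) + x^{-1}y^{-1}\,tr(UF) = tr(F) + x^{-1}y^{-1}\,at(F),
\]
and by the same expansion $tr(FV) = tr(F) + x^{-1}y^{-1}\,tr(FU) = tr(F) + x^{-1}y^{-1}\,at(F)$, which proves the first identity (and in passing shows $tr(VF) = tr(FV)$, as claimed). For the second identity,
\[
at(VF) = at(F) + x^{-1}y^{-1}\,at(UF) = at(F) + x^{-1}y^{-1}\cdot y\,tr(F) = at(F) + x^{-1}\,tr(F),
\]
and identically $at(FV) = at(F) + x^{-1}y^{-1}\,at(FU) = at(F) + x^{-1}\,tr(F)$. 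Note we are in characteristic two, so there are no signs to track, and $x, y$ are invertible (we are over the Laurent ring $A$), so all the inverse powers appearing are legitimate elements of $A$.

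There is essentially no obstacle here: the "hard part," such as it is, is simply making sure the auxiliary identities $tr(UF) = at(F)$ and $at(UF) = y\,tr(F)$ (and their right-multiplication analogues) are stated with the correct normalization, since the functional $at(F) = yb + c$ carries a factor of $y$ that has to be matched against the $y$ appearing in $U^2 = y\Id$. Once that is pinned down, the lemma follows by linearity in one line each. I would present the auxiliary multiplication identities as a displayed computation and then deduce the two stated formulas immediately.
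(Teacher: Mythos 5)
Your proof is correct, and since the paper omits a proof altogether (it says only ``The following lemma can be checked easily''), there is no alternative argument to compare against; expanding $V=\Id+x^{-1}y^{-1}U$ and using the linearity of $tr$ and $at$ together with the short computation of $UF$ and $FU$ is exactly the natural direct verification the authors intend. You have also (correctly) read through a typo in the statement: the second displayed line should read $at(VF)=at(FV)=at(F)+x^{-1}\cdot tr(F)$, with $at(FV)$ rather than $tr(FV)$, and your argument proves the corrected version.
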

Using this lemma, we can prove the following
\begin{lemma}
For any matrix $(2\times 2)$ matrix $F$ satisfying $tr(VF)=at(VF) =0$, we have
$tr(F) = at(F)=0$ and hence
$$ [U, F] =0$$
\end{lemma}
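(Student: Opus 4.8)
\section*{Proof proposal}

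The plan is to convert the two scalar hypotheses $tr(VF)=0$ and $at(VF)=0$ into a linear system in the unknowns $tr(F)$ and $at(F)$ over the Laurent polynomial ring $A$, and then solve it. Applying the previous lemma to $F$, the equation $tr(VF)=0$ reads $tr(F) + x^{-1}y^{-1}\, at(F) = 0$, and the equation $at(VF)=0$ reads $x^{-1}\, tr(F) + at(F) = 0$. In other words, setting $v = \begin{pmatrix} tr(F) \\ at(F) \end{pmatrix} \in A^2$, we have $Vv = 0$, where
\[
V = \begin{pmatrix} 1 & x^{-1}y^{-1} \\ x^{-1} & 1 \end{pmatrix}
\]
is exactly the matrix $V = \Id + x^{-1}y^{-1}U$ from the start of this section, now viewed as an $A$-linear operator on $A^2$.

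The key point is that this operator is injective. By part (2) of the first lemma, $V^2 = (1 + x^{-2}y^{-1})\,\Id$, and $1 + x^{-2}y^{-1}$ is a nonzero element of $A = \K[x^{\pm 1},y^{\pm 1}]$. Since $A$ is an integral domain, $1 + x^{-2}y^{-1}$ is a non-zero-divisor. Applying $V$ once more to $Vv = 0$ gives $V^2 v = (1 + x^{-2}y^{-1})\, v = 0$, and cancelling the non-zero-divisor yields $v = 0$, i.e. $tr(F) = at(F) = 0$.

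Finally, part (3) of the first lemma computes $[U,F] = \delta_U(F) = \begin{pmatrix} at(F) & tr(F) \\ y\, tr(F) & at(F) \end{pmatrix}$, which vanishes once $tr(F) = at(F) = 0$; this gives the remaining assertion $[U,F]=0$. There is no serious obstacle in this argument; the only point requiring care is that the ``inversion'' of $V$ be carried out over $A$ itself, not over its fraction field $\K(x,y)$ — this is legitimate precisely because $A$ is an integral domain and $1 + x^{-2}y^{-1}\neq 0$ is therefore a non-zero-divisor that may be cancelled.
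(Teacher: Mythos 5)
Your proof is correct and follows essentially the same route as the paper: in both cases one plugs the formulas for $tr(VF)$ and $at(VF)$ into the hypotheses to obtain a linear system in $tr(F)$ and $at(F)$, and the conclusion rests on the fact that $1 + x^{-2}y^{-1}$ is a nonzero element of the integral domain $A$. The only cosmetic difference is that you observe the coefficient matrix of the system is literally $V$ itself and invoke $V^2 = (1+x^{-2}y^{-1})\,\mathrm{Id}$, whereas the paper reaches $(1+x^{-2}y^{-1})\,tr(F)=0$ by direct substitution of one equation into the other.
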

\begin{proof}
From the previous lemma, we have 
$$tr(F)= x^{-1}y^{-1} \cdot at(F),  at(F) = x^{-1} \cdot tr(F)$$
Hence, $tr(F) = x^{-2}y^{-1} \cdot tr(F)$, which implies that $tr(F) = at(F)=0$.
\end{proof}

As a corollary, \eqref{eq:trati} implies that
$$tr(VD+AV) = tr(VD)+ tr(AV) = tr(VD) +tr(VA)  = tr(V(A+D))$$
and similarly $at(VD+AV) = at(V(A+D))$.
Thus, we have $[U, A+D]=0$ or 
\begin{equation}\label{eq:tratii}
D = A + [U,T]
\end{equation}
 for some $T$ since $\delta_U$ is acyclic.

Similarly, from the (1,1) component of the expression \eqref{eq:deltaQf}, we have
$VC+xBV$ is in the image of $\delta_U$, and hence $tr(VC+xBV)= at(VC+xBV)=0$.
Applying the same argument, we get
$[U,C] = [U,xB]$  or  
\begin{equation}\label{eq:tratii}
C= xB + [U,S]
\end{equation}
for some $S$.

Therefore, $\delta_Q(f)=0$ implies that $D=A+[U,T], C=xB+[U,S]$.
If we set $$f =  \left(\begin{array}{cc}  A & B \\xB+ [U,S] &A+[U,T] \end{array}\right),$$
the equation $\delta_Q(f)$ becomes
$$[Q,f] = \left(\begin{array}{cc}  [U,A]+x[V,B] +V[U,S] & [U,B] +[V,A] +V[U,T] \\
x[U,B] +x[V,A] +xV[U,T] &   [U,A]+x[V,B] +V[U,S] \end{array}\right)$$
$$ =  \left(\begin{array}{cc}  [U,A+y^{-1}B] +V[U,S] & [U,B+x^{-1}y^{-1}A] +V[U,T] \\
x([U,B +x^{-1}y^{-1}A] +V[U,T]) &   [U,A+y^{-1}B] +V[U,S]  \end{array}\right).$$
Note that $[V,[U,S]] = [U,[U,S]]=0$ from the identity $V= \Id + x^{-1}y^{-1}U$,
which implies that $V[U,S] = [U,S]V$.

Thus $\delta_Q(f)=0$ is equivalent to 
\begin{eqnarray}
[U,A+y^{-1}B]  &=& V[U,S] \\
\; [U,B +x^{-1}y^{-1}A] &=&V[U,T] 
\end{eqnarray}
or equivalently,  $A,B,S,T$ satisfy the following {\em closed-conditions}:
\begin{eqnarray}\label{eq:closecond}
at(A+y^{-1}B)  &=& at(S) + x^{-1} tr(S)  \\
tr(A+y^{-1}B)  &=& tr(S) + x^{-1}y^{-1} at(S) \nonumber \\ 
at(B +x^{-1}y^{-1}A)  &=& at(T) + x^{-1} tr(T)  \nonumber \\
tr(B +x^{-1}y^{-1}A)  &=& tr(T) + x^{-1}y^{-1} at(T) \nonumber
\end{eqnarray}
After elementary operations, this is equivalent to
\begin{eqnarray}
at(A+y^{-1}B)  &=& at(S) + x^{-1} tr(S)  \\
at(A+y^{-1}B) + x^{-1}tr(A+y^{-1}B)  &=& (1 + x^{-2}y^{-1} )at(S)  \nonumber\\
at(B +x^{-1}y^{-1}A) +x^{-1}tr(B +x^{-1}y^{-1}A)  &=& (1+ x^{-2}y^{-1}) at(T) \nonumber \\
tr(B +x^{-1}y^{-1}A)  &=& tr(T) + x^{-1}y^{-1} at(T) \nonumber
\end{eqnarray}
Here, by combining the second and third equation, we may replace the second condition by
\begin{equation}\label{eq:st}
 \big(at(B)+x^{-1}tr(B)\big)(1+x^{-1}y^{-2}) = 
(1 + x^{-2}y^{-1} )\big(at(T) + x^{-1}y^{-1} at(S) \big).
\end{equation}

This implies that $(1+x^{-2}y^{-1})$ divides 
$at(B)+x^{-1}tr(B) = x^{-1}(b_1+b_4) + yb_2 + b_3$
where $B = \left(\begin{array}{cc}  b_1 & b_2 \\b_3&b_4 \end{array}\right)$.
Hence, $b_3 = x^{-1}(b_1+b_4) + yb_2 + p (1+x^{-2}y^{-1})$
for some $p \in \mathbb{K}[x^{\pm1},y^{\pm 1}]$.

%Take 
%$A'  = \left(\begin{array}{cc}  0 & 0 \\ yb_2& b_4+ x^{-1}y^{-1}p %\end{array}\right)$.
%$B' = \left(\begin{array}{cc}  0 &0 \\ x^{-1}y^{-1}p & 0 \end{array}\right)$.
%$C'=\left(\begin{array}{cc}  y^{-1}b+y^{-1}b_4+x^{-1}y^{-2}p & 0 \\ 0& 0 %\end{array}\right)$.
%$D'=\left(\begin{array}{cc}  b_1 & b_2 \\ p& 0 \end{array}\right)$.
%Then it is easy to check that
%$[Q, \left(\begin{array}{cc}  A' & B' \\C'&D' \end{array}\right)] =
%\left(\begin{array}{cc}  * &B \\xB&* \end{array}\right)$.
Therefore, we conclude:
\begin{prop}\label{prop:deltaQ}	\hfill
\begin{enumerate}
\item Let $f = \left(\begin{array}{cc}  A& B \\C&D \end{array}\right)$
be a closed element, that is $\delta_Q(f)=0$. Then
$D=A +[U,T]$, $C=xB+[U,S]$ for some $S,T$ and
these satisfy the closed conditions \eqref{eq:closecond}.

\item Any element $f$ with $\delta_Q(f)=0$ is cohomologous
to a multiple of an identity matrix, $\alpha \cdot \Id$.
\end{enumerate}
\end{prop}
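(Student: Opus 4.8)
The plan is to finish the computation already set up in the body of this section. Part (1) of Proposition \ref{prop:deltaQ} has essentially been established in the preceding paragraphs: one decomposes $f$ into its four $2\times 2$ blocks $A,B,C,D$, uses the vanishing of the off-diagonal and diagonal components of $[Q,f]$ in \eqref{eq:deltaQf}, the acyclicity of $\delta_U$, and Lemmas 6.2--6.4, to conclude $D = A + [U,T]$ and $C = xB + [U,S]$ for some auxiliary matrices $S,T$, and that $A,B,S,T$ satisfy the closed conditions \eqref{eq:closecond}. So the remaining work is part (2): showing any closed $f$ is cohomologous to $\alpha\cdot\Id$ for some $\alpha\in\textrm{Jac}(W)$.

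First I would exploit the freedom of adding a coboundary $\delta_Q(g)$ to normalize $f$. Writing $g$ in $2\times 2$ blocks, $\delta_Q(g) = [Q,g]$ is computed by the same formula \eqref{eq:deltaQf}; choosing $g$ supported appropriately lets us kill, modulo $\delta_Q$-image, as much of the block data of $f$ as possible. The concrete target is to reduce to the case where $f$ is block-diagonal with $A = D$ and $B = C = 0$ (so $T$ and $S$ can be taken to be zero after the reduction), i.e. $f$ is of the form $\mathrm{diag}(A,A)$ with $[U,A]$ killed by a further coboundary adjustment --- equivalently, $A$ is itself $\delta_U$-closed, hence of the scalar-plus-traceless form described in Lemma 6.2(3). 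Then I would feed the scalar part into the residual closed condition and show the traceless part must also be a $\delta_Q$-coboundary, leaving only $A = a\cdot\Id$ with $a\in\mathbb{K}[x^{\pm1},y^{\pm1}]$; the condition $\delta_Q(f) = 0$ then forces $a$ to be constant modulo the Jacobian ideal $(\partial_x W,\partial_y W)$, which is exactly the statement that $f$ is cohomologous to $\alpha\cdot\Id$ with $\alpha\in\textrm{Jac}(W)$.

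The main obstacle is the bookkeeping in the reduction step: the closed conditions \eqref{eq:closecond}, together with the divisibility consequence that $(1+x^{-2}y^{-1})$ divides $at(B)+x^{-1}tr(B)$, must be combined with explicit coboundary subtractions, and one has to verify carefully that each normalization can be performed without re-introducing terms one has already eliminated --- i.e. that the sequence of reductions is actually consistent. In particular, Equation \eqref{eq:st} couples $S$, $T$ and $B$, so the order in which one normalizes $B$, then $S$ and $T$, then $A$, matters. Once the normal form $f \sim \mathrm{diag}(A,A)$ with $A$ scalar is reached, the conclusion that the scalar lies in $\textrm{Jac}(W)$ is the standard fact that the identity component of the endomorphism complex of a matrix factorization computes a copy of the Jacobian ring, which combined with the divisibility relation pins down $\textrm{Jac}(W)\cong \K[x]/(x^3-1)$; this last identification is then recorded as the ``in particular'' clause of Theorem \ref{thm:mf_rp2}. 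The full grind --- choosing the coboundaries explicitly and checking the closed conditions collapse as claimed --- is exactly the ``rather long computation'' the authors defer, and I would carry it out block by block following the normalization just outlined.
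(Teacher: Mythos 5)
Your outline tracks the paper's actual proof quite closely: part (1) is indeed the preceding discussion, and for part (2) the paper carries out precisely the normalization sequence you describe --- first subtract an explicit coboundary $\bigl[Q, \bigl(\begin{smallmatrix} A' & B' \\ C' & D' \end{smallmatrix}\bigr)\bigr]$ built from the entries $b_i$ of $B$ and the element $p$ coming from the divisibility consequence, together with $\bigl[Q, \bigl(\begin{smallmatrix} 0 & 0 \\ S & 0 \end{smallmatrix}\bigr)\bigr]$, to kill the off-diagonal blocks; then re-impose closedness to get $D=A$ with $at(A)=tr(A)=0$; then subtract one more coboundary to kill the off-diagonal entries of $A$, leaving $a_1\cdot\Id$. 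You have correctly identified both the right intermediate normal form and the pivotal role of the divisibility of $at(B)+x^{-1}tr(B)$ by $1+x^{-2}y^{-1}$.

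However, the proposal stops at exactly the point where the content lies: you do not exhibit the coboundaries and verify that they kill the desired blocks without disturbing earlier normalizations, and you flag yourself that "the order matters" without resolving the issue. In the paper this is not a routine afterthought --- the specific matrices $A',B',C',D'$ (involving $y^{-1}b_1+y^{-1}b_4+x^{-1}y^{-2}p$ etc.) are chosen so that the coboundary is literally $\bigl(\begin{smallmatrix}* & B \\ xB & *\end{smallmatrix}\bigr)$, and one still needs to re-derive the closed conditions for the block-diagonal remainder to conclude $[U,T]=0$ and $at(A)=tr(A)=0$. Since that verification is the substance of the proposition, the proposal as written is a correct plan rather than a proof.

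There is also a small conceptual slip in the last step. Once $f$ has been reduced to $a\cdot\Id$, the condition $\delta_Q(f)=0$ is automatic (scalar matrices commute with $Q$), so it does not "force $a$ to be constant modulo the Jacobian ideal." The proposition only asserts $f\sim\alpha\cdot\Id$ for \emph{some} $\alpha$; that two scalars give cohomologous multiples of the identity iff they agree in $\mathrm{Jac}(W)$ is the content of the \emph{next} proposition (injectivity of $\mathcal{C}$), not of this one. Keeping these two statements separate matters: the injectivity argument runs the closed-condition analysis with an extra $\alpha\cdot\Id$ term on the right-hand side and is logically independent of part (2) here.
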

\begin{proof}
 The first statement follows from the previous discussion. For the second statement we first define

 $A'  = \left(\begin{array}{cc}  0 & 0 \\ yb_2& b_4+ x^{-1}y^{-1}p \end{array}\right)$.
 $B' = \left(\begin{array}{cc}  0 &0 \\ x^{-1}y^{-1}p & 0 \end{array}\right)$.
 $C'=\left(\begin{array}{cc}  y^{-1}b_1+y^{-1}b_4+x^{-1}y^{-2}p & 0 \\ 0& 0 \end{array}\right)$.
 $D'=\left(\begin{array}{cc}  b_1 & b_2 \\ p& 0 \end{array}\right)$.
 Where the $b_i$ and $p$ are as above.
 Then it is easy to check that
$$[Q, \left(\begin{array}{cc}  A' & B' \\C'&D' \end{array}\right)] =
\left(\begin{array}{cc}  * &B \\xB&* \end{array}\right),
[Q, \left(\begin{array}{cc}  0 & 0 \\S&0\end{array}\right)] =
\left(\begin{array}{cc}  0 &0 \\ \;[U,S]&0 \end{array}\right).$$
Therefore,  any $f$ with $\delta_Q(f)=0$
is cohomologous to an element of the form 
\begin{equation}\label{eq:diag1}
\left(\begin{array}{cc}  A &0 \\0& D \end{array}\right).
\end{equation}

An element of the form \eqref{eq:diag1} is $\delta_Q$-closed
if and only if $D =A+ [U,T]$ and the closed-conditions
$$at(A) = tr(A) =0, at(T) = x^{-1}tr(T), tr(T) = x^{-1}y^{-1}at(T),$$
are satisfied.
This implies also that $at(T)=tr(T)=0$, or $[U,T]=0$.
Therefore, we have $A=D$ with $at(A) =tr(A)=0$,
and hence $A$ is of the form 
$\left(\begin{array}{cc}  a_1 & a_2 \\ ya_2& a_1 \end{array}\right)$.

Next, note that if we consider $C=\left(\begin{array}{cc}  0& a_2 \\ya_2&0 \end{array}\right)$, then 
$\delta_Q( \left(\begin{array}{cc}  0 &0 \\ C& 0 \end{array}\right))
=\left(\begin{array}{cc}  F & 0 \\0&F \end{array}\right)$
for $F$ of the type $\left(\begin{array}{cc}  *& a_2 \\ya_2&* \end{array}\right)$.
Hence we conclude that $f$ is cohomologous to 
a multiple of an identity matrix $\alpha \cdot \Id$.
\end{proof}

The next proposition completes the proof of Theorem \ref{thm:mf_rp2}.

\begin{prop}
%Let $W=x+y+x^{-1}y^{-1}$ and take 
%$Jac(W) = \frac{ \mathbb{K}(x,y)}{(\partial_xW, \partial_y W)}$.
Let $\mathcal{C}: Jac(W) \to {\mathrm End}_{H(\mathcal{MF}^{un}(W))}(Q_{\mathbb{R}P^2})$ be the map sending $\alpha \to \alpha \cdot \Id$. This map is an isomorphism.
\end{prop}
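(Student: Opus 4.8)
The plan is to show that the map $\mathcal{C}\colon \mathrm{Jac}(W)\to {\mathrm End}_{H(\mathcal{MF}^{un}(W))}(Q_{\mathbb{R}P^2})$ is both surjective and injective, and for this I would break the argument into three parts: computing the source ring explicitly, establishing surjectivity, and establishing injectivity.

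First I would compute $\mathrm{Jac}(W)$ for $W=x+y+x^{-1}y^{-1}$ over $\K$ (characteristic $2$). The partials are $\partial_x W = 1 - x^{-2}y^{-1}$ and $\partial_y W = 1 - x^{-1}y^{-2}$. From $\partial_x W = 0$ one gets $x^2 y = 1$ and from $\partial_y W = 0$ one gets $xy^2 = 1$; dividing these gives $x = y$, and then $x^3 = 1$. So $\mathrm{Jac}(W) \cong \K[x]/(x^3-1)$, a $3$-dimensional $\K$-vector space spanned by $1, x, x^2$ (using $y = x^{-1} \cdot x^{-1} \cdot \text{stuff}$, i.e. $y \equiv x^{-2} \equiv x$ in the quotient). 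This identifies the expected right-hand side and in particular shows $\mathcal{C}$ is injective provided no nonzero scalar multiple $\alpha\cdot\Id$ of a Jacobian class is a $\delta_Q$-coboundary — which I address below.

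For surjectivity, I would invoke Proposition \ref{prop:deltaQ}(2): every $\delta_Q$-closed endomorphism $f$ is cohomologous to $\alpha\cdot\Id$ for some $\alpha\in A$. It remains to check that the scalar $\alpha$ is well-defined modulo $(\partial_x W,\partial_y W)$, i.e. that the assignment $f\mapsto [\alpha]$ descends to a map ${\mathrm End}_{H}(Q)\to \mathrm{Jac}(W)$ inverse to $\mathcal{C}$. Concretely: if $\alpha\cdot\Id = \delta_Q(g)$ for some $g$, one must show $\alpha \in (\partial_x W, \partial_y W)$; conversely, Equation \eqref{eq:Qdiff} shows $(\partial_{z_i}W)\cdot\Id = \delta_Q((\partial_{z_i}Q))$ is already a coboundary, so $\mathcal{C}$ annihilates the ideal and hence factors through $\mathrm{Jac}(W)$. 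Combining this with Proposition \ref{prop:deltaQ}(2) gives a surjection. The main work here is the bookkeeping already carried out before the proposition, tracking the "closed conditions" \eqref{eq:closecond} and the normalization steps that reduce a general closed $f$ to $\alpha\cdot\Id$, then further reduce the diagonal form to a genuine scalar.

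The main obstacle — and the step I would dwell on — is the injectivity of the induced map $\mathrm{Jac}(W)\to {\mathrm End}_H(Q)$, equivalently showing that if $\alpha\cdot\Id$ is $\delta_Q$-exact then $[\alpha]=0$ in $\mathrm{Jac}(W)$. The natural route is a dimension count: I would argue $\dim_\K {\mathrm End}_{H}(Q) \le 3$ directly from the cohomological computation (Proposition \ref{prop:deltaQ} shows every class is represented by a scalar, and one must then quotient the $3$-dimensional space $\{\alpha\cdot\Id : \alpha \in \K[x]/(x^3-1)\}$ by any further coboundary relations among scalars). Since $\mathcal{C}$ is a surjection from a $3$-dimensional space onto ${\mathrm End}_H(Q)$, if I can show $\dim {\mathrm End}_H(Q) \ge 3$ — for instance by exhibiting three explicit closed endomorphisms, namely $\Id$, $x\cdot\Id$, $x^2\cdot\Id$, that are pairwise non-cohomologous (detected, say, by pairing with $\mathcal{F}^\bL(\alpha)=x^2$ from Proposition \ref{prop:FLiso} or by a direct inspection that no $g$ with $\delta_Q(g)=p(x)\cdot\Id$ exists for $p$ a nonzero polynomial of degree $<3$ in $\K[x]/(x^3-1)$) — then $\mathcal{C}$ is a surjection between spaces of equal finite dimension, hence an isomorphism, and the last displayed isomorphism ${\mathrm End}_H(Q)\cong \K[x]/(x^3-1)$ follows. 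The delicate point is verifying that no nontrivial scalar is exact, which I expect requires carefully revisiting the explicit coboundary formulas for $A',B',C',D'$ in the proof of Proposition \ref{prop:deltaQ} and checking they never produce a nonzero multiple of $\Id$ without also forcing $\alpha \in (\partial_xW,\partial_yW)$.
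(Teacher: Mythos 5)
Your outline matches the paper's on two of the three steps: well-definedness via Equation~\eqref{eq:Qdiff}, and surjectivity via Proposition~\ref{prop:deltaQ}(2). The computation of $Jac(W) \cong \K[x]/(x^3-1)$ is also correct and matches the corollary in the theorem statement. The genuine gap is in the injectivity step, which you correctly identify as ``the step I would dwell on'' but then do not actually carry out.

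Two specific issues. First, the dimension-count reframing does not reduce the work: knowing $\mathcal{C}$ is a surjection from a $3$-dimensional space, the claim $\dim {\mathrm End}_H(Q) \ge 3$ is logically equivalent to injectivity, so nothing is gained by phrasing it this way --- you still need to rule out relations $\delta_Q(g) = \alpha\cdot\Id$ for $\alpha \notin (\partial_x W, \partial_y W)$, which is exactly the hard content. Second, your proposed ``detector'' via pairing with $\mathcal{F}^\bL(\alpha) = x^2\cdot\Id$ from Proposition~\ref{prop:FLiso} is circular: the proof of Proposition~\ref{prop:FLiso} explicitly invokes Theorem~\ref{thm:mf_rp2} (the very statement containing the present proposition) to pass from the chain-level identity $[Q,M] = \mathcal{F}^\bL(\alpha) + x^{-1}\Id$ to the cohomological conclusion, so you cannot use it here. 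What the paper actually does for injectivity is re-run the analysis leading to the closed conditions~\eqref{eq:closecond}, but now with the equation $\delta_Q(f) = \alpha\cdot\Id$ instead of $\delta_Q(f)=0$. This modifies the first closed condition to $at(A + y^{-1}B) = at(S) + x^{-1}tr(S) + \alpha$, and propagating this through the elimination steps produces an explicit identity expressing $\alpha$ as an $A$-linear combination of $1 + x^{-1}y^{-2} = \partial_x W$ and $1 + x^{-2}y^{-1} = \partial_y W$. That algebraic extraction is the heart of the injectivity proof and is missing from your proposal; ``carefully revisiting the explicit coboundary formulas'' is not a substitute for performing it.
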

\begin{proof}
%Observe that by differentiating $Q^2=W \cdot Id$, we have
%$$\partial_x Q \cdot Q +  Q \cdot \partial_x Q = \partial_xW \cdot Id,$$
%$$\partial_y Q \cdot Q +  Q \cdot \partial_y Q= \partial_yW \cdot Id,$$
First we note that Equation \ref{eq:Qdiff} implies that the map $\mathcal{C}$ is well-defined.
Surjectivity follows from Proposition \ref{prop:deltaQ}.

To show that $\mathcal{C}$ is an isomorphism, we prove that
if $\alpha \cdot \Id = \delta_Q(f)$ for some $f$, then $\alpha$
is an element of Jacobian ideal.
From the equation \eqref{eq:deltaQf}, and proceeding as in the previous case,
we easily see that
$VD+AV$ is in the image of $\delta_U$ or $D = A+ [U,T]$ for some $T$, and
$VC+xBV$ is in the image of $\delta_U + \alpha \cdot \Id$.
But since $\alpha \cdot \Id $ is also $\delta_U$ exact (as it is $\delta_U$-closed),
$VC+xBV$ is in the image of $\delta_U$ and hence we may set
$C= xB+[U,S]$ as before.

We also have the closed conditions \eqref{eq:closecond},
where the first equation should be modified as
$at(A+y^{-1}B) = at(S) + x^{-1} tr(S) + \alpha$.

The equation \eqref{eq:st} is modified as
\begin{equation}\label{eq:st}
 (at(B)+x^{-1}tr(B))(1+x^{-1}y^{-2}) = 
(1 + x^{-2}y^{-1} )(at(T) + x^{-1}y^{-1} at(S)) + x^{-1}y^{-1}\alpha
\end{equation}
or equivalently,
\begin{equation}\label{eq:st}
\alpha = xy (at(B)+x^{-1}tr(B))(1+x^{-1}y^{-2}) +  
xy(1 + x^{-2}y^{-1} )(at(T) + x^{-1}y^{-1} at(S)).
\end{equation}
Hence, $\alpha$ lies in the ideal generated by 
$\partial_xW = (1+x^{-1}y^{-2})$ and $ \partial_y W =(1+x^{-2}y^{-1})$.
\end{proof}

\bibliographystyle{amsalpha}
\bibliography{geometry}

\end{document}